\theoremstyle{plain}
\newtheorem{theorem}{Theorem}[section]
\newtheorem{lemma}[theorem]{Lemma}
\numberwithin{equation}{section}
\numberwithin{theorem}{section}
\newcommand{\mc}[1]{{\mathcal #1}}
\newcommand{\mb}[1]{{\mathbf #1}}
\newcommand{\ms}[1]{{\mathscr #1}}
\DeclareMathOperator{\gap}{gap}
\newcommand{\ind}{\mathbf{1}}
\newcommand{\upbar}[1]{\,\overline{\! #1}}
\newcommand{\arginf}{\mathrm{arg \,inf}\:}
\renewcommand{\epsilon}{\varepsilon}
\renewcommand{\tilde}{\widetilde}
\title[Dynamical behavior of the ABC model]
{On the dynamical behavior of the ABC model} 
\author [L.\ Bertini]{Lorenzo Bertini}
\address{Lorenzo Bertini \hfill\break \indent
  Dipartimento di Matematica, Universit\`a di Roma `La Sapienza',
  \hfill\break \indent
  P.le Aldo Moro 5, 00185 Roma, Italy}
\email{bertini@mat.uniroma1.it}
\author [N.\ Cancrini]{Nicoletta Cancrini}
\address{Nicoletta Cancrini\hfill\break \indent
  Dipartimento di Matematica Pura e Applicata, Universit\`a dell'Aquila,
  \hfill\break \indent
  67100 Coppito, L'Aquila, Italy}
\email{nicoletta.cancrini@univaq.it}
\author [G.\ Posta]{Gustavo Posta}
\address{Gustavo Posta\hfill\break \indent
  Dipartimento di Matematica, Politecnico di Milano
  \hfill\break \indent
  P.za Leonardo da Vinci 32, 20133 Milano, Italy }
\email{gustavo.posta@polimi.it}
\begin{document}

\noindent
\keywords{ABC model, Spectral gap, Mean field Gibbs measures, 
Interchange process.}

\subjclass[2000]
{Primary 
60K35, 
82C20; 
Secondary 
82C22, 
60B15. 
}

\begin{abstract}
We consider the ABC dynamics, with equal density of the three species,
on the discrete ring with $N$ sites.
In this case, the process is reversible with respect to a Gibbs measure
with a mean field interaction that undergoes a second order phase
transition. 
We analyze the relaxation time of the dynamics and show that at high
temperature it grows at most as $N^2$ while it grows at least as
$N^3$ at low temperature.
\end{abstract}

\maketitle
\thispagestyle{empty}

\section{Introduction}

The \emph{ABC model}, introduced by Evans et al.\ \cite{5,6}, is a
one-dimensional stochastic conservative dynamics with local jump rates
whose invariant measure undergoes a phase transition.  It is a system
consisting of three species of particles, traditionally labeled $A$,
$B$, $C$, on a discrete ring with $N$ sites. The system evolves by
nearest neighbor particles exchanges with the following (asymmetric)
rates: $AB \to BA$, $BC \to CB$, $CA \to AC$ with rate $q\in (0,1]$
and $BA \to AB$, $CB \to BC$, $AC \to CA$ with rate $1/q$.  In
particular, the total numbers of particles $N_\alpha$,
$\alpha\in\{A,B,C\}$, of each species are conserved and satisfy
$N_A+N_B+N_C=N$. Observe that the case $q=1$ corresponds to a three
state version of the symmetric simple exclusion process.  When
$q\in(0,1)$, Evans et al.\ \cite{5,6} argued that in the thermodynamic
limit $N \to \infty$ with $N_\alpha /N \to r_\alpha$ the system
segregates into pure $A$, $B$, and $C$ regions, with translationally
invariant distribution of the phase boundaries.  In the equal densities
case $N_A=N_B=N_C=N/3$ the dynamics is reversible and its invariant
measure can be explicitly computed.
As shown in \cite{FF1,FF2}, the ABC model can be reformulated 
terms of a dynamic of random walks on the triangular lattice.

As discussed by Clincy et al.\ \cite{8}, the natural scaling to
investigate the asymptotic behavior of the ABC model is the
\emph{weakly asymmetric} regime $q=\exp\big\{-\frac{\beta}{2N}\big\}$,
where the parameter $\beta\in [0,+\infty)$ plays the role of an
inverse temperature.  With this choice the reversible measure of the
equal densities case $r_A=r_B=r_C=1/3$ becomes a canonical Gibbs
measure, that we denote by $\nu_N^\beta$, with a mean field
Hamiltonian.  
The measure $\nu_N^\beta$ undergoes a second order phase transition at
$\beta_{\mathrm{c}}= 2\pi\sqrt{3}\approx 10.88$. 
This phase transition has been further analyzed in \cite{Ayetal} and
it is described in terms of the free energy functional $\mc F_\beta$
associated to $\nu_N^\beta$. 
The functional $\mc F_\beta$ is (apart an additive constant) the large
deviations rate function for $\nu_N^\beta$ in the scaling limit in
which the discrete ring with $N$ sites is embedded in the
one-dimensional torus and the particles configuration is described in
terms of the corresponding densities profiles $(\rho_A,\rho_B,\rho_C)$.  
In this limit, $\mc F_\beta(\rho_A,\rho_B,\rho_C)$ thus gives the
asymptotic probability of observing the density profile
$(\rho_A,\rho_B,\rho_C)$. 
In particular, the minimizer of $\mc F_\beta$ describes the typical
behavior of the system as $N\to\infty$.  The phase transition of
$\nu_N^\beta$ corresponds to the following behavior of the free energy
functional \cite{Ayetal,8,FF1}.  For $\beta\in[0,\beta_\mathrm{c}]$
the minimum of $\mc F_\beta$ is uniquely achieved at the homogeneous
profile $(1/3,1/3,1/3)$.  For $\beta>\beta_{\mathrm{c}}$ the
functional $\mc F_\beta$ has a continuum of minimizers, parameterized
by the translations, which describes the phase segregation.  As shown
in \cite{bdlw}, this phase transition can also be detected via the
two-point correlation functions of $\nu_N^\beta$ which become singular
when the system approaches the transition.  As shown in
\cite{Ayetal,FF1}, for $\beta>2\beta_\mathrm{c}$ the functional $\mc
F_\beta$ has other critical points besides the homogeneous profile and
the one-parameter family of minimizers.

For unequal densities the invariant measure of the ABC dynamics on a
ring is not reversible, that is the stationary state is no longer an
equilibrium state, and cannot be computed explicitly. As discussed in
\cite{bdlw,8}, a stability analysis of the homogeneous density profile
shows that for $\beta>2\pi\big[1-2(r_A^2+r_B^2+r_C^2)\big]^{-1/2}$ it
becomes unstable.  As stated there, one however expects that the phase
transition, at least for particular values of the parameters $r_A$,
$r_B$, $r_C$, becomes of the first order.  Again in \cite{bdlw,8}, the
asymptotic of the two-point correlation functions is computed in the
homogeneous phase and the large deviation rate function $\mc F_\beta$
has been calculated up to order $\beta^2$.  When the ABC dynamics is
considered on an open interval with reflecting endpoints, the
corresponding invariant measure is reversible for all values of the
densities \cite{Ayetal}.  In particular, it has the same Gibbs form as
the one in the ring for the equal density case.

The main purpose of the present paper is the discussion of the phase
transition of the ABC model on a ring with $N$ sites from a dynamical
viewpoint. More precisely, we focus on the asymptotic behavior, as
$N$ diverges, of the \emph{relaxation time} $\tau_N^\beta$ which
measures the time the dynamics needs to reach the stationary
probability.  Our analysis is restricted to the equal density case
$r_A=r_B=r_C=1/3$ in which the invariant measure $\nu_N^\beta$ is
explicitly known and reversible. As usual, the relaxation time
$\tau_N^\beta$ is defined as the inverse of the \emph{spectral gap} of
the generator $L_N^\beta$ of the underlying Markov process. Observe
that, in view of the reversibility, $L_N^\beta$ is a selfadjoint
operator on $L^2(d\nu_N^\beta)$. Our main result implies that the asymptotic
behavior of the relaxation time $\tau_N^\beta$ reflects the phase
transition of the corresponding stationary measure $\nu_N^\beta$. We
indeed show that for $\beta$ small enough $\tau_N^\beta$ is at most of
order $N^2$ while for $\beta>\beta_\mathrm{c}$ it is at least of order
$N^3$.

The diffusive behavior $\tau_N^\beta\sim N^2$ is characteristic of
conservative dynamics in the high temperature regime, the typical
example being the Kawasaki dynamics for the Ising model. Indeed, this
has been proven by different techniques in several contexts, see e.g.\
\cite{bcdp,CM,LY}.  We here follow the approach introduced in
\cite{bcdp} which is based upon a perturbative argument in $\beta$ and
can be directly applied to the case of mean field interactions.  
On the other hand, the behavior $\tau_N^\beta\sim N^3$ in the
supercritical regime is characteristic of the system under
consideration; we briefly discuss the heuristic picture.
As discussed in \cite{8} and then also in \cite{bdlw}, at time
$O(N^2)$ the densities 
profiles of the three species $(\rho_A,\rho_B,\rho_C)$ evolve
according to the hydrodynamic equations
\begin{equation}
  \label{hy}
  \begin{split}
    &\partial_t \rho_A + \beta\,\nabla\big[ \rho_A(\rho_C-\rho_B)\big]
    = \Delta \rho_A
    \\
    &\partial_t \rho_B + \beta \,\nabla \big[ \rho_B
    (\rho_A-\rho_C)\big] = \Delta \rho_B
    \\
    &\partial_t \rho_C + \beta \, \nabla \big[ \rho_C
    (\rho_B-\rho_A)\big] = \Delta \rho_C
  \end{split}
\end{equation}
where $\nabla$ and $\Delta$ denote the gradient and Laplacian on the
continuous torus, respectively.
As follows from microscopic reversibility, the evolution \eqref{hy}
can be obtained as a suitable gradient flow of the free energy $\mc
F_\beta$. In particular, while the homogeneous profile $(1/3,1/3,1/3)$
is the unique, globally attractive, stationary solution to \eqref{hy}
for $\beta<\beta_\mathrm{c}$, the (one parameter family of) minimizers of
$\mc F_\beta$ are stationary solutions to \eqref{hy} when
$\beta>\beta_\mathrm{c}$.  According to the \emph{fluctuating
  hydrodynamic theory}, we argue that, for large but finite $N$, the
hydrodynamic equation \eqref{hy} gives an accurate description of the
system provided one adds in \eqref{hy} a suitable noise term
$O\big(1/\sqrt{N}\big)$.  At time $O(N^2)$ the ABC dynamics then behaves
as a Brownian motion on the set of minimizers of $\mc F_\beta$ with
diffusion coefficient proportional to $1/N$, see  \cite{bd}.
The time to thermalize is thus $O(N^3)$.

The above scenario accounts for the correct asymptotics of the relaxation
time as long as there are no other local minima of $\mc F_\beta$.
As proven in \cite{Ayetal}, this is certainly the case for
$\beta\in(\beta_\mathrm{c}, 2\beta_\mathrm{c}]$.  
On the other hand, for $\beta>2\beta_\mathrm{c}$ other critical points
of $\mc F_\beta$ do appear but it is not known if they correspond to
local minima (this is indeed an open question in \cite{Ayetal}).
Were they local minima the ABC model would exhibit a metastable
behavior, i.e.\ starting in a neighborhood of such local minima the process
would spend a time exponential in $N$ in that neighborhood before reaching
the global minimizer.  Numerical evidences \cite{cf} suggest such
metastable behavior which would imply $\tau_N^\beta \sim \exp\{ c N\}$
for $\beta>2\beta_\mathrm{c}$.

\section{Notation and results}
\label{s:nr}

\subsection*{The ABC process}
Given a positive integer $N$, we let $\mb Z_N=\{0,\cdots,N-1\}$ be the
ring of the integers modulo $N$.  The configuration space with $N$
sites is $\tilde\Omega_N:=\{A,B,C\}^{\mb Z_N}$, elements of
$\tilde\Omega_N$ are denoted by $\zeta$, for $x\in\mb Z_N$ the species
of the particle at the site $x$ is thus $\zeta(x)\in \{A,B,C\}$.  We
also let $\eta_{\alpha}\colon \tilde\Omega_N \to \{0,1\}^{\mb Z_N}$,
$\alpha\in \{A,B,C\}$, be the $\alpha$ occupation numbers namely,
$[\eta_{\alpha}(\zeta)] (x) := \ind_{\{\alpha\}}(\zeta(x))$ in which $\ind_E$
stands for the indicator function of the set $E$.  Note that for each
$x\in\mb Z_N$ we have $\eta_{A}(x)+\eta_{B}(x)+\eta_{C}(x) =1$.  Whereas
$\eta=(\eta_A,\eta_B,\eta_C)$ is a function of the configuration
$\zeta$ we shall omit to write explicitly the dependence on
$\zeta$.

Given $x,y\in \mb Z_N$ and $\zeta\in \tilde \Omega_N$ we denote by
$\zeta^{x,y}$ the configuration obtained from $\zeta$ by exchanging
the particles at the sites $x$ and $y$, i.e.\ 
\begin{equation}
\label{ex}
  \big(\zeta^{x,y}\big) \, (z) :=
  \begin{cases}
    \zeta (y)  & \text{ if $\;z=x$}, \\
    \zeta (x)  & \text{ if $\;z = y$}, \\
    \zeta (z)  & \text{ otherwise}.
  \end{cases}
\end{equation}
The ABC process is the continuous time Markov chain on the state space
$\tilde\Omega_N$ whose generator $L_N=L_{N}^\beta$ acts on functions
$f\colon\tilde\Omega_N \to \mb R$ as
\begin{equation}
\label{L=}
  L_N^\beta f (\zeta) = \sum_{x\in\mb Z_N} c^\beta_x(\zeta) 
  \big[ f(\zeta^{x,x+1}) - f(\zeta)\big]
\end{equation}
where, for $\beta\ge 0$ the jump rates $c^\beta_x=c_{x}^{\beta,N}$ are given by 
\begin{equation}
  \label{rates}
  c_{x}^{\beta,N}(\zeta) :=
  \begin{cases}
    \exp\{ -\frac \beta{2N}\big\} & \text{ if $\; (\zeta(x),\zeta(x+1)) \in 
      \{ (A,C), (C,B), (B,A)\}$} \\ 
     \exp\{ \frac \beta{2N}\big\} &\text{ otherwise.}
  \end{cases}
\end{equation}

As follows from \eqref{L=}, the total number of particles of each
species is conserved.  Therefore, given three positive integers
$N_\alpha$, $\alpha\in\{A,B,C\}$ such that $N_A+N_B+N_C =N$, we have a
well defined process on the linear manifold $\sum_{x\in\mb Z_N}
\eta_\alpha(x)=N_\alpha$, $\alpha\in\{A,B,C\}$.  The ABC dynamics is
irreducible when restricted to such manifold; hence the process is
ergodic and admits a unique invariant measure.  In the case $\beta=0$
this measure is the uniform probability.  On the other hand, when
$\beta>0$ the explicit expression of the invariant measure is in
general not known.  However, as we next discuss, in the case
$N_A=N_B=N_C$ the ABC process satisfies the detailed balance condition
with respect to a mean field Gibbs measure \cite{5,6}.

\subsection*{Invariant measure in the equal densities case}
We assume that $N$ is a multiple of $3$ and we
restrict to the case in which $N_A=N_B=N_C$. We shall then consider
the ABC process on 
\begin{equation}
  \label{Om}
  \Omega_N:= \Big\{ \zeta\in \tilde \Omega_N \, : \: 
  \sum_{x\in\mb Z_N} \eta_A(x)=\sum_{x\in\mb Z_N}\eta_B(x)
  =\sum_{x\in\mb Z_N}\eta_C(x)= \frac N3
  \Big\}.  
\end{equation}
The Hamiltonian $H_N\colon\Omega_N\to \mb R$ is defined by
\begin{equation}
  \label{ham}
  H_N(\zeta) := \frac{1}{N^2} 
  \sum_{0\le x<y\le N-1}\big[ 
  \eta_A(x) \eta_C(y) +\eta_B(x) \eta_A(y) +\eta_C(x) \eta_B(y) 
  \big].
\end{equation}
In view of the equal densities constraint, an elementary computation
shows that the right hand side above does not depend on the choice of
the origin. Equivalently, $H_N$ is a translation invariant function on
$\Omega_N$.
Given $\beta\ge 0$, we denote by $\nu_N^\beta$ the
probability measure on $\Omega_N$ defined by
\begin{equation}
  \label{gm}
  \nu_N^\beta(\zeta) := \frac{1}{Z_N^\beta}\exp\big\{-\beta N H_N(\zeta)\big\}
\end{equation}
where $Z_N^\beta$, the partition function, is the proper
normalization constant. 
In the sequel, given a function $f$ on $\Omega_N$ we denote
respectively by $\nu_N^\beta(f)$ and $\nu_N^\beta(f,f)$ the
expectation and variance of $f$ with respect to $\nu_N^\beta$.

As observed in \cite{5,6}, the ABC process is reversible with respect
to $\nu_N^\beta$. In other worlds, the generator $L_N^\beta$ in
\eqref{L=} is a selfadjoint operator on $L^2(\Omega_N,d\nu_N^\beta)$
and in particular $\nu_N^\beta$ is the invariant measure.

\subsection*{Asymptotics of the spectral gap}

The spectrum of $L_N^\beta$ in \eqref{L=}, 
considered as a selfadjoint operator on $L^2(\Omega_N,\nu_N^\beta)$,
is a finite subset of the negative real axes and, in view of the
ergodicity of the associated process, zero is a simple eigenvalue of
$L_N^\beta$. 
The \emph{spectral gap} of $L_N^\beta$, denoted by $\gap(L_N^\beta)$,
is the absolute value of the second largest eigenvalue.  
The spectral gap can be characterized in variational terms as
follows: $\gap(L_N^\beta)$ is largest constant $\lambda\ge 0$ 
such that the Poincar\'e inequality  
\begin{equation}
  \label{rr}
  \lambda \, \nu_N^\beta(f,f) \le \nu_N^\beta \big( f (-L_N^\beta) f\big)
\end{equation}
holds for any $f\in L^2(\Omega_N,d\nu_N^\beta)$. 
The spectral gap controls the speed of convergence to equilibrium of
the associated process in the following sense. For each $f\in
L^2(\Omega_N,d\nu_N^\beta)$,
\begin{equation*}
  \nu_N^\beta \big(e^{t L_N^\beta}f \,, \,e^{t L_N^\beta} f \big)
  \le e^{ - 2\, \gap(L_N^\beta) \, t}  \: \nu_N^\beta (f,f).
\end{equation*}

Our main result concerns the asymptotic behavior of $\gap(L_N^\beta)$
as $N$ diverges. In particular we show this behavior differs in the
subcritical and supercritical regimes.

\bigskip 

\begin{theorem} $~$
  \label{t:mt}
  \begin{itemize}
  \item[(i)]
    There exist constants $\beta_0,C_0>0$ such that for any
    $\beta\in [0,\beta_0]$ and any $N$
    \begin{equation}
      \label{i}
      \gap(L_{N}^\beta)  \ge  C_0\, \frac {1}{N^2}\cdot
    \end{equation}
  \item[(ii)] 
    Let $\beta_\mathrm{c}:=2\pi \sqrt{3}$. For each 
    $\beta> \beta_\mathrm{c}$ there exists a constant $C(\beta)>0$
    such that for any $N$  
    \begin{equation}
      \label{ii}
      \gap(L_N^\beta)  \le  {C (\beta) } \, \frac 1 {N^3} \cdot
    \end{equation}
  \end{itemize}
\end{theorem}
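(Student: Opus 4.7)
\emph{Part (i).} Following the perturbative approach of \cite{bcdp}, I would start from the estimate at $\beta=0$. For $\beta=0$ all rates equal $1$ and $L_N^0$ is the generator of the interchange (colored symmetric exclusion) process on $\mb Z_N$; its spectral gap is of order $1/N^2$. Since $c_x^{\beta,N}=e^{\pm\beta/(2N)}=1+O(\beta/N)$ uniformly in $\zeta$, the Dirichlet form satisfies
\[
\nu_N^\beta\bigl(f(-L_N^\beta)f\bigr)\ge\bigl(1-C\beta/N\bigr)\,\tfrac12\sum_{x\in\mb Z_N}\nu_N^\beta\!\Bigl((f(\zeta^{x,x+1})-f(\zeta))^2\Bigr),
\]
so it suffices to prove a Poincar\'e inequality of constant $C/N^2$ for $\nu_N^\beta$ with respect to the symmetric form on the right. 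The naive reduction to the $\beta=0$ Poincar\'e inequality by a Radon--Nikodym comparison fails because $d\nu_N^\beta/d\nu_N^0\propto\exp(-\beta NH_N)$ is not uniformly bounded (its fluctuations are of order $N$). I would instead use a block-decomposition argument: recursively split $\mb Z_N$ into two comparable arcs, condition on the three occupation numbers in each arc, apply the conditional variance decomposition, and iterate down to arcs of constant length. The crucial point is that the mean-field Hamiltonian $H_N$ contributes per-bond energy of order $\beta/N$, so the interaction between different arcs is a summable perturbation; this is precisely where the smallness of $\beta$ enters and is the main technical obstacle for part (i).

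\medskip

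\emph{Part (ii).} I would apply the variational characterisation of $\gap(L_N^\beta)$ with the explicit test function
\[
f(\zeta):=\sum_{x\in\mb Z_N}\cos\!\Big(\frac{2\pi x}{N}\Big)\,\eta_A(x),
\]
the first discrete Fourier mode of the $A$-occupation number. By translation invariance of $\nu_N^\beta$ one has $\nu_N^\beta(f)=0$. A nearest-neighbour swap at the bond $(x,x+1)$ modifies $f$ by $\bigl(\cos(2\pi x/N)-\cos(2\pi(x+1)/N)\bigr)(\eta_A(x+1)-\eta_A(x))$, whose absolute value is $O(1/N)$; since $c_x^{\beta,N}\le e^{\beta/(2N)}\le C(\beta)$, summing over bonds gives
\[
\nu_N^\beta\bigl(f(-L_N^\beta)f\bigr)=\tfrac12\sum_{x\in\mb Z_N}\nu_N^\beta\!\Bigl(c_x^{\beta,N}(f(\zeta^{x,x+1})-f(\zeta))^2\Bigr)=O\bigl(1/N\bigr).
\]
The conclusion $\gap(L_N^\beta)\le C(\beta)/N^3$ then follows from $\gap(L_N^\beta)\le\nu_N^\beta(f(-L_N^\beta)f)/\nu_N^\beta(f,f)$ provided one can prove the matching variance lower bound $\nu_N^\beta(f,f)\ge c(\beta)\,N^2$ for $\beta>\beta_{\mathrm{c}}$.

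\medskip

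This variance lower bound is the main obstacle. Heuristically, for $\beta>\beta_{\mathrm{c}}$ the free energy $\mc F_\beta$ is minimised on a one-parameter family of translation-equivalent \emph{non-constant} density profiles whose first Fourier coefficient has modulus bounded below by some $a(\beta)>0$. Under $\nu_N^\beta$ the macroscopic $A$-density should therefore be asymptotically a uniformly random translate of such a minimiser, so that $f/N$ behaves like $(a(\beta)/2)\cos\Theta$ for $\Theta$ uniform on $[0,2\pi)$; this yields a strictly positive asymptotic value of $\nu_N^\beta((f/N)^2)$, hence $\nu_N^\beta(f,f)\ge c(\beta)N^2$. To make this rigorous I would combine the large deviation principle for the empirical densities under $\nu_N^\beta$, with rate function $\mc F_\beta-\inf\mc F_\beta$ (as established in \cite{Ayetal,8,FF1}), with a Laplace-type argument: the map $(\rho_A,\rho_B,\rho_C)\mapsto\int_0^1\cos(2\pi u)\rho_A(u)\,du$ is continuous, has mean zero on the compact set $\arginf\mc F_\beta$ by translation invariance, and its square has strictly positive average on this set. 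The delicate point is to upgrade the large deviation bound to a quantitative estimate on the variance, which requires some care because $\arginf\mc F_\beta$ is a continuum rather than a finite set of isolated minimisers. Dividing the two bounds then yields $\gap(L_N^\beta)\le C(\beta)/N^3$, proving part (ii).
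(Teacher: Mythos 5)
For part (ii) your proposal is essentially the paper's argument: a Rayleigh--Ritz bound with a slowly varying test function, a Dirichlet-form estimate of order $1/N^3$ from Lipschitz continuity, and a variance lower bound of order one obtained from the asymptotics of $\mc P_N^\beta$. Two remarks. First, your specific choice $\phi(r)=\cos(2\pi r)$ commits you to proving that the first Fourier coefficient of $\rho^{*,\beta}_A$ does not vanish; the paper sidesteps this by taking a general Lipschitz mean-zero $\phi$ and observing only that, since $\rho^{*,\beta}$ is non-constant, \emph{some} such $\phi$ has $\int_0^1 \rho^{*,\beta}_B\phi\,dr\neq0$. Second, your worry that the variance bound requires ``upgrading the LDP to a quantitative estimate'' or is complicated by the continuum of minimizers is misplaced: the paper first proves a genuine law of large numbers (Theorem~\ref{t:lln}(ii)), namely that $\mc P_N^\beta$ converges weakly on the compact space $\mc M$ to $\int_0^1\!ds\,\delta_{\tau_s\rho^{*,\beta}}$, and then the variance limit is an immediate consequence of weak convergence against a continuous bounded functional. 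The real content is the identification of the limit measure as uniform over translates, which follows from translation invariance of $\nu_N^\beta$ and uniqueness of the translation-invariant probability on $\mb T$; the LDP alone only gives concentration on $\arginf\mc F_\beta$.

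For part (i), your proposal takes a genuinely different route from the paper's, and there is a real gap. You correctly diagnose that the crude Radon--Nikodym comparison with $\nu_N^0$ fails because $\beta N H_N$ fluctuates on scale $N$. But the replacement you propose---a recursive block decomposition with conditioning on block occupation numbers---is precisely the class of methods (Lu--Yau, Cancrini--Martinelli) that the paper explicitly flags as not directly applicable to mean-field Hamiltonians. Your key heuristic, that the ``per-bond energy is of order $\beta/N$ so the interaction between different arcs is a summable perturbation,'' is not quantitatively correct as stated: the cross-arc contribution to $\beta N H_N$ sums $O(N^2)$ terms of size $\beta/N^3$ each, giving $O(\beta N)$ in the Gibbs exponent, which is macroscopically large, not small. (The mean-field structure does make this cross term a function of the block occupation totals alone, so conditioning removes it, but the residual intra-block Hamiltonian then enters with an effective inverse temperature that must be tracked carefully through the recursion; none of this is worked out.) The paper avoids the decomposition issue entirely: it defines the ABC process with long jumps on the complete graph (generator $\ms L_N^\beta$), compares Dirichlet forms via the standard path argument to get $\ms D_N^\beta \le 2e^{3\beta}N^2 D_N^\beta$ (Lemma~\ref{t:cam}), and, crucially, proves in Appendix~\ref{s:a} that $\gap(\ms L_N^\beta)$ is bounded below uniformly in $N$ for small $\beta$ by a Bakry--\'Emery $\Gamma_2$-type computation on the perturbed interchange process (Theorem~\ref{t:a1}, following \cite{bcdp,Q}). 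That uniform gap on the complete graph is the piece of the proof your proposal is missing, and it is the genuinely nontrivial step.
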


The above statement raises few natural issues.   
As discussed in the Introduction, the $1/N^2$ asymptotic of the
spectral gap is a common feature of conservative stochastic dynamics
in the high temperature regime. Indeed, as proven in \cite{Q} for the
simple exclusion process and in \cite{CM,LY} for high temperature
Kawasaki dynamics, the spectral gap admits an upper bound that matches
\eqref{i}. 
We expect that the diffusive behavior $ \gap(L_{N}^\beta) =O(1/N^2)$
holds for any $\beta\in [0,\beta_\mathrm{c})$.  The methods used in
the present paper are based on a perturbation argument around
$\beta=0$ and their extension to the whole subcritical regime does not
appear feasible.  In principle, the techniques developed in
\cite{CM,LY}, which require as an input a strong spatial mixing of the
stationary probability, can be applied up to the critical temperature.
Those techniques have been however developed for short range
interactions and they do not seem, at least directly, applicable to
mean field Hamiltonians.

Another, somehow more fundamental, issue is whether $1/N^3$ is the
right scaling of the spectral gap in the supercritical regime.  
We expect that this is the correct scaling for
$\beta$ between $\beta_\mathrm{c}$ and $2\beta_\mathrm{c}$.
We mention that this behavior is also the one expected for the
Kawasaki dynamics for the low temperature two dimensional Ising model
with plus boundary condition (pure state) \cite{CCM}. Indeed, in this
case the heuristic picture presented in the Introduction corresponds
to the diffusion of the Wulff bubble.  While the statement (ii) in
Theorem~\ref{t:mt} is proven by exhibiting a suitable slowly varying
test function, a proof of a matching lower bound appears considerably
harder. The ABC model is however much simpler than short range models
and it therefore might be a useful starting point toward the
understanding of conservative dynamics in the phase transition region.

For $\beta$ larger than $2\beta_\mathrm{c}$, a preliminary question is
whether the other critical points of $\mc F_\beta$ correspond to local
minima. In such a case it is possible to construct a slowly
varying test function which yields the upper bound $\gap(L_N^\beta)
\le \exp\{ - c N \}$ for some constant $c=c(\beta)>0$.  Observe that
the general argument in \cite{CCM} gives for free the lower bound
$\gap(L_N^\beta) \ge \exp\{-C N\}$ for some $C=C(\beta)<+\infty$.

We finally discuss the behavior of the spectral gap of the ABC process
on an interval with zero flux condition at the endpoints.  As shown in
\cite{Ayetal}, in such a case the process is reversible with respect
to a mean field Gibbs probability for all values of the densities. In
the high temperature regime $\beta\ll 1$, the methods here developed
can be directly applied to get the diffusive behavior $1/N^2$.  
As far as the low temperature regime is concerned, the case of equal
densities is the same as the one on the ring and we can therefore
conclude that the upper bound $1/N^3$ holds also in this setting.

\section{Asymptotics of the Gibbs measure}

The upper bound on the spectral gap in the supercritical regime
requires the law of large numbers for the empirical density with
respect to the Gibbs measure $\nu_N^\beta$.  This result is proven by
combining the large deviations principle for $\nu_N^\beta$ with the
analysis of the minimizers of the free energy in \cite{Ayetal}.
As $\nu_N^\beta$ is a Gibbs measure with a mean field interaction, 
the associated large deviations principle can be proven by standard
tools. The specific application to the ABC model has not however been
detailed in the literature, we thus present here the whole argument.

\subsection*{Empirical density}

We let $\mb T :=\mb R / \mb Z$ be the one-dimensional torus of side
length one; the coordinate on $\mb T$ is denoted by $r \in [0,1)$.
The inner product in $L^2(\mb T,dr;\mb R^3)$ is denoted by
$\langle\cdot,\cdot\rangle$.
We set $\tilde{\mc M} := L^\infty \big(\mb T, dr;[0,1]^3 \big)$ and
denote by $\rho=(\rho_A,\rho_B,\rho_C)$ its elements.  We consider
$\tilde{\mc M}$ endowed with the weak* topology. Namely, a sequence
$\{\rho^n\}$ converges to $\rho$ in $\tilde{\mc M}$ iff
$\langle \rho^n, \phi \rangle \to \langle \rho, \phi \rangle$ for any
function $\phi\in L^1(\mb T,dr;\mb R^3)$, equivalently for any smooth
function $\phi\in C^\infty(\mb T;\mb R^3)$. Note that $\tilde{\mc M}$
is a compact Polish space, i.e.\ separable, metrizable, and complete.

We introduce
\begin{equation}
  \label{mcm}
  \mc M := \Big\{\rho\in \tilde{\mc M} \,:\: \rho_A+\rho_B+\rho_C=1\,,\: 
  \int_0^1\!dr\,\rho_\alpha(r)= \frac 13\,,~\alpha\in\{A,B,C\}
  \Big\}
\end{equation}
noticing it is a closed subset of $\tilde{\mc M}$ that we
consider equipped with the relative topology and the associated Borel
$\sigma$-algebra. 
The set of Borel probability measures on $\mc M$, denoted by $\ms P
(\mc M)$, is endowed with the topology induced by the weak convergence
of probability measures; namely, $\mc P_n\to \mc P$ iff for each
continuous $F\colon\mc M\to \mb R$ we have $\int \!d\mc P_n \, F
\to \int \!d\mc P \,F$. Note that also $\ms P (\mc M)$ is a compact Polish
space.

We define the \emph{empirical density} as the map
$\pi_N\colon\Omega_N\to\mc M$ given by
\begin{equation}
  \label{2:ed}
  \pi_N (\zeta) \, (r) \;:=\; \sum_{x\in\mb Z_N} \eta (x) \, 
  \ind_{[ x/N, (x+1)/N )}(r) \,,\quad r\in \mb T, 
\end{equation}
recall $\eta$ is the map defined at the beginning of
Section~\ref{s:nr}.  We set $\mc P_N^\beta:= \nu_N^\beta\circ
\pi_N^{-1}$ namely, $\mc P_N^\beta$ is the law of $\pi_N$ when $\zeta$
is distributed according to $\nu_N^\beta$. Note that $\{\mc
P_N^\beta\}$ is a sequence in $\ms P (\mc M)$.

\subsection*{Large deviations principle}

The \emph{entropy} (with a sign convention opposite to the standard
one in physical literature) is the convex lower semicontinuous
functional $\mc S\colon\mc M \to [0,+\infty)$ defined by
\begin{equation}
  \label{en}
  \mc S(\rho) := \int_0^1\!dr\:
  \Big[ \rho_A(r) \log \frac {\rho_A(r)}{1/3}  + \rho_B(r) \log
  \frac{\rho_B(r)}{1/3} 
    + \rho_C(r) \log \frac {\rho_C(r)}{1/3} \Big] 
\end{equation}
and the \emph{energy} is the continuous functional 
$\mc H\colon\mc M \to \mb R$  defined by  
\begin{equation}
  \label{Ham}
  \mc H(\rho) := \int_0^1\!dr \int_r^1\,dr'\:
  \Big[ \rho_A(r) \rho_C(r') +\rho_B(r) \rho_A(r') +\rho_C(r) \rho_B(r') \Big].
\end{equation}
For $\beta\ge 0$ the \emph{free energy} (in which we omit the
prefactor $1/\beta$) is finally the 
functional $\mc F_\beta\colon\mc M \to \mb R$ defined by
\begin{equation}
  \label{fe}
  \mc F_\beta := \mc S + \beta\, \mc H .
\end{equation}

\begin{theorem}
  \label{t:ld}
  The sequence $\{\mc P_{N}^\beta \}$ satisfies a large deviation
  principle with rate function 
  $\mc I_\beta= \mc F_\beta -\inf\mc F_\beta$.
  Namely, for each closed set $\mc C \subset \mc
  M$ and each open set $\mc O \subset \mc M$
  \begin{eqnarray*}
    &&
    \varlimsup_{N\to\infty} \frac 1{N} \log \mc P_{N}^\beta
    \big( \mc C \big)
    \;\leq\; - \inf_{\rho \in \mc C} \mc I_\beta (\rho)
    \\
    &&
    \varliminf_{N\to\infty} \frac 1{N} \log \mc P_{N}^\beta
    \big( \mc O \big) \;\geq\; -  \inf_{\rho \in \mc O} \mc I_\beta(\rho).
  \end{eqnarray*}
\end{theorem}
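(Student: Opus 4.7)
My plan is to reduce the statement to a Varadhan-type tilting argument based on the mean-field structure: since the Hamiltonian $H_N$ is in fact an exact function of the empirical density, the LDP for $\mc P_N^\beta$ follows from the LDP for the uniform measure $\nu_N^0$ combined with a tilt by $-\beta\mc H$.

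As a preliminary I verify the identity $H_N(\zeta)=\mc H(\pi_N(\zeta))$ for every $\zeta\in\Omega_N$. Inserting the step function $\pi_N(\zeta)$ into \eqref{Ham}, the inner integrals $\int_{x/N}^{(x+1)/N}\!dr\int_{y/N}^{(y+1)/N}\!dr'\,\ind_{\{r<r'\}}$ evaluate to $1/N^2$ for $x<y$, to $0$ for $x>y$, and to $1/(2N^2)$ on the diagonal, and the diagonal terms drop out since $\eta_\alpha(x)\eta_{\alpha'}(x)=0$ for $\alpha\neq\alpha'$. This reduces the task to tilting the LDP for $\mc P_N^0=\nu_N^0\circ\pi_N^{-1}$ by the continuous bounded functional $-\beta\mc H$. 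The tilting step itself is routine: one has $\mc P_N^\beta=(\mc P_N^0[e^{-\beta N\mc H}])^{-1}e^{-\beta N\mc H}\mc P_N^0$, and $\mc H$ is bounded and weak$^*$-continuous on $\mc M$ (the kernel $\ind_{\{r<r'\}}$ lies in $L^1$ and densities are uniformly bounded), so Varadhan's lemma applied to the LDP of $\mc P_N^0$ yields $\lim_N\frac{1}{N}\log\mc P_N^0[e^{-\beta N\mc H}]=-\inf_{\mc M}\mc F_\beta$; the standard tilted-LDP derivation then produces both bounds with rate $\mc F_\beta-\inf\mc F_\beta=\mc I_\beta$.

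The core work lies in the LDP for $\mc P_N^0$ with rate $\mc S$. Let $\mu_N$ denote the product measure on $\tilde\Omega_N$ with uniform one-site marginal. Under $\mu_N$ I would show that the empirical density satisfies an LDP on $\tilde{\mc M}$ whose rate function extends $\mc S$ by applying G\"artner--Ellis to the logarithmic moment generating function
\begin{equation*}
  \Lambda(\phi)=\lim_N\frac{1}{N}\log E_{\mu_N}\bigl[e^{N\langle\pi_N,\phi\rangle}\bigr]=\int_0^1\!dr\,\log\Bigl(\tfrac{1}{3}\sum_\alpha e^{\phi_\alpha(r)}\Bigr)
\end{equation*}
on smooth test fields $\phi\in C(\mb T;\mb R^3)$; alternatively one recovers the same rate from Stirling applied to the multinomial count of configurations with prescribed block averages. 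Since $\nu_N^0$ is $\mu_N$ conditioned on the event $\{\sum_x\eta_\alpha(x)=N/3,\ \alpha\in\{A,B,C\}\}$, which has probability of order $1/N$ by the local CLT for the multinomial, the conditioning is invisible at the exponential scale and the LDP descends to $\mc P_N^0$ on $\mc M$ with rate $\mc S$ (its infimum being zero at the constant profile $(1/3,1/3,1/3)$).

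The main obstacle is precisely this last step: the empirical density is a spatial profile rather than a classical empirical measure on a finite alphabet, so Sanov's theorem does not apply directly. One must either verify the exposed-points and differentiability hypotheses of G\"artner--Ellis on an infinite-dimensional dual space (identifying the convex conjugate of $\Lambda$ with the extended entropy), or carry through a Stirling-type combinatorial estimate on approximating block densities, in both cases combined with the local central limit theorem needed to justify the microcanonical reduction from $\tilde\Omega_N$ to $\Omega_N$.
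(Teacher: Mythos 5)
Your reduction of the general-$\beta$ statement to the $\beta=0$ case is exactly the paper's: the identity $H_N(\zeta)=\mc H(\pi_N(\zeta))$, the boundedness and weak* continuity of $\mc H$ on the compact set $\mc M$, and the Laplace--Varadhan theorem give both bounds with rate $\mc F_\beta-\inf\mc F_\beta$. The gap is in the core step, the LDP for $\mc P_N^0$ with rate $\mc S$, which you only sketch, and the one concrete mechanism you offer for it does not suffice as stated. The assertion that, because $\mu_N(\Omega_N)$ is of order $1/N$, ``the conditioning is invisible at the exponential scale and the LDP descends'' is valid only for the upper bound, where one simply divides by $\mu_N(\Omega_N)\ge c/N$. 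For the lower bound you need that $\mu_N(\pi_N\in\mc O,\,\Omega_N)$ is not exponentially smaller than $\mu_N(\pi_N\in\mc O)$, i.e.\ that among configurations whose empirical profile is close to a given non-constant $\rho\in\mc M$ the exact canonical constraint still carries polynomially comparable mass; this requires a local CLT for an inhomogeneous, profile-dependent tilted product measure (a triangular array), not the multinomial local CLT under $\mu_N$ that you invoke. This is precisely why the paper restricts the tilting fields to the constraint-compatible class $\mc A$ in \eqref{AL}, applies the local CLT for triangular arrays to obtain $\frac 1N\log\mu_N^\lambda(\Omega_N)\to 0$, and then proves the lower bound not via G\"artner--Ellis but via the relative-entropy/Jensen argument with the tilted measures \eqref{tilt}, followed by an approximation step (lower semicontinuous envelope) to pass from smooth profiles bounded away from $0$ and $1$ to general $\rho$.

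Relatedly, the G\"artner--Ellis route you propose for the unconditioned measure is itself problematic for the lower bound: in the infinite-dimensional weak* setting the exposed-point and steepness hypotheses are not available in any straightforward way, which is why one is in any case pushed to the change-of-measure lower bound (or to carrying out the Stirling block estimate in full). The upper-bound part of your plan is sound and matches the paper's Step 2 (exponential Chebyshev, optimization over tilts, mini-max lemma, Legendre duality, compactness of $\mc M$). So the overall architecture is right, but the lower bound for $\mc P_N^0$ --- both the microcanonical reduction under tilted measures and the replacement of G\"artner--Ellis by an explicit tilted-measure entropy argument with the approximation step --- is the substantive content that remains missing from your proposal.
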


Since the beautiful Lanford's lectures \cite{La}, large deviations
principles for Gibbs measures has become a basic topic in equilibrium
statistical mechanics, see in particular \cite{El} for the case of
mean field interactions. On the other hand, the current setting is not
completely standard as we are looking to large deviations of the empirical
density for canonical Gibbs measures. We therefore give a detailed
proof of the above result. 
The first step is the large deviations principle when $\beta=0$;
recall that $\mc P_N^0= \nu_N^0\circ \pi_N^{-1}$ is the law of $\pi_N$
when $\zeta$ is distributed according to $\nu_N^0$ which is the
uniform probability on $\Omega_N$.

\begin{lemma}
  \label{t:ld0}
  The sequence $\{ \mc P_{N}^0\}$ satisfies a large deviation
  principle with rate function $\mc S$.  Namely, for
  each closed set $\mc C \subset \mc M$ and each open set $\mc O
  \subset \mc M$
  \begin{eqnarray}
    &&
    \label{ub0}
    \varlimsup_{N\to\infty} \frac 1{N} \log \mc P_{N}^0
    \big( \mc C \big)
    \;\leq\; - \inf_{\rho \in \mc C} \mc S (\rho)
    \\
    &&
    \label{lb0}
    \varliminf_{N\to\infty} \frac 1{N} \log \mc P_{N}^0
    \big( \mc O \big) \;\geq\; -  \inf_{\rho \in \mc O} \mc S(\rho).
  \end{eqnarray}
\end{lemma}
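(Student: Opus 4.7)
The plan is to derive the LDP from a Stirling-type combinatorial estimate for block occupation numbers on coarse partitions of the torus, and then to lift from block averages to the full weak* topology via a Dawson--G{\"a}rtner projective limit argument. Since $\mc M$ is compact, exponential tightness is automatic; only the upper bound on closed sets and the lower bound on open sets are at issue. The combinatorial core is as follows. Fix $k$ dividing $N$, partition $\mb Z_N$ into consecutive blocks $B_0,\ldots,B_{k-1}$ of size $N/k$, and set $m^i_\alpha(\zeta):=\sum_{x\in B_i}\eta_\alpha(x)$. Since $\nu_N^0$ is uniform on $\Omega_N$, for any admissible array $(m^i_\alpha)$ (i.e., with row sums $\sum_\alpha m^i_\alpha=N/k$ and column sums $\sum_i m^i_\alpha=N/3$) one has
\begin{equation*}
\nu_N^0\big(m^i_\alpha(\zeta)=m^i_\alpha \text{ for all } i,\alpha\big)=\binom{N}{N/3,N/3,N/3}^{-1}\prod_{i=0}^{k-1}\binom{N/k}{m^i_A,m^i_B,m^i_C}.
\end{equation*}
With $\bar\rho^i_\alpha:=(k/N)m^i_\alpha$ and $\bar\rho\in\mc M$ the piecewise constant profile equal to $(\bar\rho^i_A,\bar\rho^i_B,\bar\rho^i_C)$ on $[i/k,(i+1)/k)$, Stirling's formula turns this into $\tfrac{1}{N}\log\nu_N^0(\cdots)=-\mc S(\bar\rho)+O(k(\log N)/N)$, uniformly over $\bar\rho$ in compact subsets with entries bounded away from zero.

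For the lower bound \eqref{lb0}, given $\rho\in\mc O$ open, approximate $\rho$ in weak* by admissible piecewise constant profiles $\bar\rho^k$ with $\mc S(\bar\rho^k)\to\mc S(\rho)$: start from the blockwise average $T_k\rho$, which automatically inherits both row and column constraints; round $(N/k)T_k\rho$ to integer multiplicities; and use $O(k)$ local adjustments to restore the integer column constraints. Convexity of $u\log u$ gives $\mc S(T_k\rho)\le\mc S(\rho)$ while martingale convergence gives $\mc S(T_k\rho)\uparrow\mc S(\rho)$, so $\mc S(\bar\rho^k)\to\mc S(\rho)$. For $k$ large and $N$ even larger the event that $(m^i_\alpha(\pi_N))$ equals the admissible array corresponding to $\bar\rho^k$ is contained in $\mc O$, and the combinatorial estimate then delivers \eqref{lb0}.

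For the upper bound \eqref{ub0}, the set of admissible arrays has polynomial-in-$N$ cardinality, so summing the combinatorial estimate over arrays whose associated profiles lie in a given weak* closed set yields, for each $k$, an LDP for the blockwise average $T_k\pi_N$ on the set $\mc M_k$ of piecewise constant profiles with mesh $1/k$, with rate $\mc S_k:=\mc S|_{\mc M_k}$. Since the projections $T_k$ generate the weak* topology on $\mc M$ (step functions being dense in $L^1(\mb T)$), the Dawson--G{\"a}rtner theorem promotes these finite-dimensional LDPs to an LDP for $\pi_N$ on $\mc M$ with rate $\sup_k \mc S_k(T_k\rho)$, and a further application of the Jensen/martingale argument identifies this supremum with $\mc S(\rho)$, yielding \eqref{ub0}. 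The main technical obstacle is the global column constraint $\sum_i m^i_\alpha=N/3$, which couples blocks and forbids purely blockwise rounding; a careful rounding lemma together with a truncation to handle degenerate profiles (where $\rho_\alpha$ vanishes on parts of $\mb T$ and $\mc S$ may fail to be continuous) is needed to secure the approximating sequence used in the lower bound.
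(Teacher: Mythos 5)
Your argument is correct in outline, and it takes a genuinely different route from the paper. The paper proves the $\beta=0$ LDP by exponential tilting: it restricts to tilts $\lambda\in\mc A$ compatible with the equal-density constraints, computes the limiting pressure $\Lambda(\lambda)$ by showing, via a local central limit theorem for triangular arrays, that the conditioning event $\Omega_N$ has subexponential probability under the tilted product measure; the upper bound then follows from exponential Chebyshev, a mini-max lemma and Legendre duality, and the lower bound from the relative-entropy criterion applied to the tilted measures $\mc P_N^{0,\lambda(\rho)}$ for smooth profiles bounded away from $0$ and $1$, completed by a lower-semicontinuous-envelope approximation. Your combinatorial route instead handles the canonical constraints exactly by counting: the multinomial identity plus Stirling yields both bounds at the level of block averages, Jensen's inequality $\mc S(T_k\rho)\le\mc S(\rho)$ alone already gives the lower bound (martingale convergence is only needed to identify $\sup_k\mc S(T_k\rho)=\mc S(\rho)$ in the Dawson--G\"artner step), and no local CLT, mini-max lemma or duality argument is required. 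The price is the bookkeeping you yourself flag: the rounding lemma restoring the integer column sums $\sum_i m^i_\alpha=N/3$ (or the truncation $(1-\epsilon)\rho+\epsilon\upbar\rho$ for degenerate profiles), the case $k\nmid N$ (almost-equal blocks), and, most importantly, the containment of the event $\{m^i_\alpha(\pi_N)=m^i_\alpha \text{ for all } i,\alpha\}$ in $\mc O$, which rests on the true but unstated uniform bound $|\langle\pi_N(\zeta)-\bar\rho^k,\phi\rangle|\le C\,\mathrm{osc}_{1/k}(\phi)$ valid whenever the block averages of $\pi_N(\zeta)$ coincide with those of $\bar\rho^k$; likewise the Dawson--G\"artner step needs the identification of $\mc M$, with its weak* topology, as the projective limit of the spaces $\mc M_k$ directed by divisibility of $k$, which follows from $\|\rho\|_\infty\le 1$, the density of step functions in $L^1(\mb T)$, and compactness of $\mc M$. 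With these standard points filled in, your proof is complete and arguably more elementary, while the paper's tilting argument is the one that fits the general Gibbs formalism and transfers directly to the $\beta>0$ measures via Laplace--Varadhan, exactly as done in the proof of Theorem~\ref{t:ld}.
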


\begin{proof}
The proof is split in few steps.

\smallskip
\noindent\emph{Step 1.}
Set 
\begin{equation}
  \label{AL}
  \mc A :=\Big\{\lambda \in  C^1(\mb T;\mb R^3) : 
  \int_0^1\!dr \, \frac {e^{\lambda_\alpha(r)}}{e^{\lambda_A(r)}
      +e^{\lambda_B(r)} +e^{\lambda_C(r)}} = \frac 13,\,
    \alpha\in\{A,B,C\}\Big\} 
\end{equation}
and let $\Lambda\colon \mc A \to \mb R$ be the functional 
\begin{equation}
  \label{Lam}
  \Lambda(\lambda) :=  \int_0^1\!dr\,
  \log \Big[ \frac 13 
  \Big( e^{\lambda_A(r)} +e^{\lambda_B(r)} +e^{\lambda_C(r)}\Big) \Big];
\end{equation}
we shall prove that for each $\lambda\in\mc A$
\begin{equation}
  \label{press}
  \lim_{N\to \infty} \frac 1N \log 
  \int\!d\mc P_N^0 (\rho) \, \exp\big\{ N \langle\lambda,\rho \rangle\big\}
  = \Lambda(\lambda). 
\end{equation}

Denote by $\lambda^N(x)$ the average of $\lambda$ in the interval 
$\big[ x/N, (x+1)/N\big)$, 
\begin{equation*}
\lambda_\alpha^N(x):= N \int_{\frac xN}^{\frac {x+1}N}\!dr\,
\lambda_\alpha(r)\,,\qquad x\in\mb Z_N\,,\;\;\alpha\in \{A,B,C\}\;.   
\end{equation*}
From the very definition of the measure $\mc P_N^0$, 
\begin{equation}
  \label{s1}
  \begin{split}
     \int\!d\mc P_N^0 (\rho) \, 
   \exp\big\{ N \langle\lambda,\rho \rangle\big\} 
   & = \sum_{\zeta\in\Omega_N} \nu^0_N(\zeta) 
   \exp\big\{N \langle\lambda,\pi^N(\zeta) \rangle\big\}
   \\
   &= \sum_{\zeta\in\Omega_N} \nu^0_N(\zeta) 
   \prod_{x\in\mb Z_N} \exp\Big\{
   \sum_{\alpha\in\{A,B,C\}} \lambda^N_\alpha(x)\eta_\alpha(x) \Big\}.
  \end{split}
\end{equation}
We denote by $\mu_N^\lambda$ the product measure on
$\tilde{\Omega}_N=\{A,B,C\}^{\mb Z_N}$ with marginals
$\mu_{N,x}^\lambda$ given by
\begin{equation*}
  \mu_{N,x}^\lambda(\alpha) = \frac{e^{\lambda_\alpha^N(x)}}
    {e^{\lambda_A^N(x)}+e^{\lambda_B^N(x)}+e^{\lambda_C^N(x)}}
    \,,\qquad \alpha\in\{A,B,C\}.
\end{equation*}
When $\lambda=0$ we drop the superscript $\lambda$ from the notation
so that $\mu_N$ is the uniform measure on $\tilde{\Omega}_N$. 

Set $\Xi_N(\lambda) := \prod_{x\in\mb Z_N}
\Big( e^{\lambda_A^N(x)}+e^{\lambda_B^N(x)}+e^{\lambda_C^N(x)}\Big)$.
As $\nu_N^0 = \mu_N (\cdot\,|\Omega_N)$, from \eqref{s1} we
get
\begin{equation*}
  \int\!d\mc P_N^0 (\rho) \, 
   \exp\big\{ N \langle\lambda,\rho \rangle\big\} 
   = \frac{\Xi_N(\lambda)}{3^N} 
   \: \frac {\mu_N^\lambda(\Omega_N)}{\mu_N(\Omega_N)}.
\end{equation*}
We claim that for each $\lambda\in\mc A$ it holds $\lim_N \frac 1N
\log \mu_N^\lambda(\Omega_N) =0$.  The proof of this step is then
completed by observing that $\frac 1N \log \big[
\Xi_N(\lambda)/3^N]\to \Lambda(\lambda)$.

To prove the claim, we write
\begin{equation*}
  \mu_N^\lambda\big( \Omega_N\big) 
  = \mu_N^\lambda\Big( \frac 1N \sum_{x\in\mb Z_N} \eta_\alpha(x) =
  \frac 13,\, \alpha\in\{A,B,C\} \Big). 
\end{equation*}
In view of the smoothness of
$\lambda$  and the constraints in \eqref{AL}, for each
$\alpha\in\{A,B,C\}$ 
\begin{equation*}
  \begin{split}
  \frac 1N \sum_{x\in\mb Z_N} \mu_N^\lambda \big(\eta_\alpha(x) \big)
  &=\frac 1N \sum_{x\in\mb Z_N} \frac{ e^{\lambda_\alpha^N(x)} }
  {e^{\lambda_A^N(x)} + e^{\lambda_B^N(x)} + e^{\lambda_C^N(x)}}
  \\
  &
  = \int_0^1\!dr\, \frac{ e^{\lambda_\alpha(r)} }
  {e^{\lambda_A(r)} + e^{\lambda_B(r)} + e^{\lambda_C(r)}}
  + O\Big(\frac 1{{N}}\Big)
  = \frac 13 + O\Big(\frac 1{{N}}\Big).
  \end{split}
\end{equation*}
The claim now follows from an application of the local central limit
theorem for triangular arrays, see e.g.\ \cite[Ch.~VII]{petrov}.

\smallskip
\noindent\emph{Step 2.}
We here prove the large deviations upper bound \eqref{ub0}.
Given $\lambda\in\mc A$, let $\mc P_N^{0,\lambda}$ be the probability
on $\mc M$ defined by 
\begin{equation}
  \label{tilt}
  d \mc P_N^{0,\lambda} :=  
      \exp\big\{N 
        \big[\langle\lambda,\cdot\rangle -\Lambda_N(\lambda) \big] \big\}
            \, d \mc P_N^{0}
\end{equation}
where 
\begin{equation*}
  \Lambda_N(\lambda) =\frac 1N \log 
  \int\!d \mc P_N^{0} (\rho) \: e^{ N\langle\lambda,\rho\rangle}.
\end{equation*}
Given a measurable subset $\mc B$ of $\mc M$, we then have 
\begin{equation*}
  \mc P_N^0 (\mc B) = \int_{\mc B} \!d \mc P_N^{0,\lambda} 
  \: \frac{d \mc P_N^0}{ d \mc P_N^{0,\lambda}}
  \:\le\: \sup_{\rho\in\mc B} \exp\big\{ - N \big[
  \langle\lambda,\rho\rangle - \Lambda_N(\lambda)\big]\big\}.
\end{equation*}
In view of Step 1, $\Lambda_N(\lambda)\to \Lambda(\lambda)$ as
$N\to\infty$. We thus get
\begin{equation*}
  \varlimsup_{N\to\infty} \frac 1N \log\mc P_N^0 (\mc B) 
  \le - \inf_{\rho\in\mc B} 
  \big\{ \langle\lambda,\rho\rangle -\Lambda(\lambda) \big\}.
\end{equation*}

By optimizing with respect to $\lambda\in \mc A$ and using a mini-max
lemma, see e.g.\ Lemmata 3.2 and 3.3 in \cite[App.~2]{KL}, we deduce
that for each compact $\mc K \subset \mc M$
\begin{equation*}
  \label{ubB}
  \varlimsup_{N\to\infty} \frac 1N \log\mc P_N^0 (\mc K) 
  \le - \inf_{\rho\in\mc K} \, \sup_{\lambda\in\mc A} \,
  \big\{ \langle\lambda,\rho\rangle -\Lambda(\lambda) \big\} 
  =- \inf_{\rho\in\mc K} \mc S(\rho)
\end{equation*}
where the last identity follows by Legendre duality.
By the compactness of $\mc M$ this concludes the proof of the
upper bound.

\smallskip
\noindent\emph{Step 3.} 
Given two probability measures $P$ and $Q$, we denote by
$\mathrm{Ent}(Q|P)=\int\! dQ \, \log [dQ/dP]$ the \emph{relative
entropy} of $Q$ with respect to $P$.  
A simple computation based on Jensen inequality, see e.g.\
\cite[Prop.~4.1]{Je}, shows that the large deviations lower bound
\eqref{lb0} can be deduced from the following statement. For each
$\rho\in \mc M$ there exists a sequence of probability measures 
$\{\mc Q_N^{\rho}\}$ such that
\begin{equation}
  \label{entb}
  \mc Q_N^{\rho}\to\delta_\rho
  \qquad \textrm{and} \qquad
  \varlimsup_{N\to\infty} \frac 1N 
  \mathrm{Ent}\big( \mc Q_N^{\rho} \big| \mc P_N^{0} \big) \le \mc S(\rho).
\end{equation}

We here construct the sequence $\{\mc Q_N^{\rho}\}$ when $\rho$ is
continuously differentiable and bounded away from $0$ and $1$.
For such a $\rho$ let $\lambda=\lambda(\rho)$ be such that 
\begin{equation*}
  \rho_\alpha =
  \frac {e^{\lambda_\alpha}}{e^{\lambda_A} +e^{\lambda_B} +e^{\lambda_C}}
  \,,\qquad \alpha\in \{A,B,C\}.
\end{equation*}
Observe that $\lambda\in\mc A$ since $\rho$ is
continuously differentiable and  bounded away from $0$ and $1$. 
Recalling \eqref{tilt}, we claim that 
$\{\mc P_N^{0,\lambda(\rho)}\}$ fulfils the condition \eqref{entb}. 
The law of large numbers 
$\mc P_N^{0,\lambda(\rho)} \to \delta_\rho$ can be indeed checked by
the same computations of Step 1. 
Furthermore, in view of such law of large numbers and Step 1,  
\begin{equation*}
  \lim_{N\to\infty}\frac 1N
  \mathrm{Ent}\big( \mc P_N^{0,\lambda(\rho)} \big| \mc P_N^{0} \big) 
  = \langle \lambda,\rho \rangle -\Lambda(\lambda) = \mc S(\rho)
\end{equation*}
where the last equality follows from the choice of $\lambda$ and
Legendre duality.

\smallskip
\noindent\emph{Step 4.} 
The proof of the lower bound is here concluded by an approximation
argument \cite[Prop.~4.1]{Je}.  Let $\mc M_\circ$ be the subset of
$\mc M$ given by the continuously differentiable profiles bounded away
from $0$ and $1$.  The condition that a large deviation rate function
is lower semicontinuous is not restrictive.  More precisely, if a
sequence of probabilities satisfies the large deviations lower bound
for some rate function, then the lower bound still holds with the
lower semicontinuous envelope of such rate function. If we let $\mc
S^\circ$ be the functional equal to $\mc S$ on $\mc M_\circ$ and $\mc
S^\circ (\rho)=+\infty$ otherwise, in view of Step 3, the proof of the
lower bound \eqref{lb0} is concluded if we show that the lower
semicontinuous envelope of $\mc S^\circ$ is $\mc S$. This amounts to
prove that
\begin{equation*}
  \mc S(\rho) = 
  \sup_{ \mc O\ni \rho} \, \inf_{\hat\rho\in \mc O \cap \mc M_\circ} 
  \mc S(\hat\rho)
\end{equation*}
where the first supremum is carried over all the open neighborhoods of
$\rho$. The previous identity is easily proven by considering a
sequence $\{\rho^n\}$ of continuously differentiable profiles bounded
away from $0$ and $1$ which converges to $\rho$ a.e.\ in $\mb T$.
\end{proof}

In view of the continuity of the functional $\mc H$ on $\mc M$, 
the large deviations principle for the sequence $\{\mc P_N^\beta\}$ is
straightforward consequence of Lemma~\ref{t:ld0} and Laplace-Varadhan
theorem.

\begin{proof}[Proof of Theorem~\ref{t:ld}]
  Recalling definitions \eqref{ham}, \eqref{2:ed} and \eqref{Ham}, we
  claim that for each $\zeta\in\Omega_N$
  \begin{equation}
    \label{h=H}
    H_N (\zeta) =\mc H(\pi^N(\zeta)) \;.
  \end{equation}
  It is indeed enough to notice that by writing explicitly the right
  hand side above the diagonal terms vanish since
  $\eta_{A}(x)+\eta_{B}(x)+\eta_{C}(x) =1$, $x\in\mb Z_N$.

  Recall that $\nu_N^0$ is the uniform probability on $\Omega_N$ and
  let $\mc B$ be a measurable subset of $\mc M$.  From \eqref{h=H} and
  the definitions of the measures $\mc P_N^\beta$ and
  $\nu_N^\beta$, see \eqref{gm}
  \begin{equation*}
    \begin{split}
    \mc P_N^\beta(\mc B) & =
    \sum_{\substack{\zeta\in \Omega_N \\ \pi^N(\zeta)\in\mc B}}
    \nu_N^\beta(\zeta) 
    = \frac {|\Omega_N|}{Z_N^\beta}
    \sum_{\substack{\zeta\in \Omega_N \\ \pi^N(\zeta)\in\mc B}} 
    \nu^0_N(\zeta) \,  e^{-\beta N H_N (\zeta)} 
    \\
    &= \frac {|\Omega_N|}{Z_N^\beta} 
    \int_{\mc B} \!d \mc P_N^0\, e^{-\beta N \,\mc H (\pi_N)}.
    \end{split}
  \end{equation*}
  In particular,  by taking $\mc B =\mc M$,
   \begin{equation*}
    \frac{Z_N^\beta}{|\Omega_N|} 
      = \int\!d \mc P_N^0\, e^{-\beta N \,\mc H (\pi_N)}.
  \end{equation*}
  Since $\mc M$ is compact and $\mc H \colon \mc M \to \mb R$ is
  continuous, by using Lemma~\ref{t:ld0} and Laplace-Varadhan theorem,
  see e.g.\ \cite[Thm.~4.3.1]{DZ}, we deduce
  \begin{equation*}
    \lim_{N\to\infty}   \frac 1N \log \frac{Z_N^\beta}{|\Omega_N|} 
    = \sup_{\rho\in \mc M} \big\{ -\beta \mc H(\rho) -\mc S(\rho)\big\}
    = -\inf_{\rho\in\mc M} \mc F_\beta(\rho).  
  \end{equation*}
  Let $\mc C$ and $\mc O$ be respectively a closed and an open 
  subset of $\mc M$. Again from Lemma~\ref{t:ld0} and Laplace-Varadhan
  theorem, see e.g.\ \cite[Ex.~4.3.11]{DZ}, we deduce
  \begin{eqnarray*}
    &&
    \varlimsup_{N\to\infty} \frac 1{N} \log 
    \int_{\mc C} \!d \mc P_N^0\, e^{-\beta N \,\mc H (\pi^N)}
    \;\leq\; - \inf_{\rho \in \mc C} \mc F_\beta (\rho)
    \\
    &&
    \varliminf_{N\to\infty} \frac 1{N} \log 
    \int_{\mc O} \!d \mc P_N^0\, e^{-\beta N \,\mc H (\pi^N)}
    \;\geq\; - \inf_{\rho \in \mc O} \mc F_\beta (\rho).
  \end{eqnarray*}
  The theorem follows readily.
\end{proof}

\subsection*{Minimizers of the free energy}

We here recall the results in \cite{Ayetal} concerning the minimizers
of the free energy $\mc F_\beta$ in \eqref{fe} that are needed in our
analysis. As discussed in \cite{Ayetal}, the Euler-Lagrange equation 
$\delta \mc F_\beta=0$ can be, equivalently, written as the system of
ordinary differential equations 
\begin{equation}
  \label{ele}
  \begin{split}
    & \rho_A'  = \beta\, \rho_A (\rho_C - \rho_B )\\
    & \rho_B' = \beta\, \rho_B (\rho_A - \rho_C ) \\
    & \rho_C'  = \beta\, \rho_C (\rho_B - \rho_A ).
  \end{split}
\end{equation}
Note that the above condition is equivalent to the statement that
$\rho=(\rho_A,\rho_B,\rho_C)$ is a stationary solution to the
hydrodynamic equation \eqref{hy}. 

Let $\beta_\mathrm{c}:= 2\pi \sqrt{3}$.  In \cite{Ayetal} it is proven
that for $\beta\in[0,\beta_\mathrm{c}]$ the unique solution to
\eqref{ele} in $\mc M$ is the homogeneous profile $\upbar{\rho} := \big(
\tfrac 13, \tfrac 13,\tfrac 13 \big)$. On the other hand, when
$\beta>\beta_\mathrm{c}$ there are non trivial solutions. In
particular, there exists a unique $\rho\in C^\infty\big(\mb R;\mb
R^3\big)$ satisfying the following conditions: (i) $\rho$ solves
\eqref{ele}, (ii) $\rho$ is periodic with period $1$, (iii) $\rho$
satisfies the constraints in \eqref{mcm} and can therefore be thought
as an element in $\mc M$, (iv) the center of mass of the $B$ species
is $1/2$, i.e. $ 3 \, \int_0^1\!dr \: r \rho_B(r) =1/2$.  We shall
denote this solution by $\rho^*=\rho^{*,\beta}$. Note that any
translation of $\rho^{*,\beta}$ satisfies conditions (i)--(iii) above
but not (iv). We emphasize that the condition (ii) requires the
minimal period to be one. Indeed, as discussed in \cite{Ayetal}, when
$\beta> n\beta_\mathrm{c}$ for some integer $n\ge 2$, there are solutions of
\eqref{ele} with period $1/n$. These are the other critical points of
$\mc F_\beta$ which may lead to a metastable behavior.

Given $s\in \mb T$ we denote by $\tau_s:\mc M\to \mc M$ the
translation by $s$, namely $(\tau_s \rho)(r) = \rho(r-s)$.  If $\mc P$
is a probability on $\mc M$, the corresponding translation is $\mc
P\circ \tau_s^{-1}$. The following statement is a (partial) rewriting
of Theorems~4.1 and 5.2 in \cite{Ayetal}. 

\begin{theorem} 
  \label{t:mfe}
  $~$
  \begin{itemize}
  \item[(i)] If $\beta\in [0,\beta_\mathrm{c}]$ then
    \begin{equation*}
      \arginf \mc F_\beta = \big\{ \upbar{\rho} \big\};
    \end{equation*}
    namely, the unique minimizer of $\mc F_\beta$ is $\upbar{\rho}$.
  \item[(ii)] If $\beta\in (\beta_\mathrm{c},+\infty)$ then
    \begin{equation*}
      \arginf \mc F_\beta = \big\{ \tau_s \rho^{*,\beta} 
      \,,\: s\in \mb T \big\};
    \end{equation*}
   namely, $\mc F_\beta$ has a one-parameter family of minimizers which are
   obtained by translating $\rho^{*,\beta}$. 
  \end{itemize}
\end{theorem}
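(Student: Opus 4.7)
The plan is to combine an existence-and-regularity argument for minimizers with an explicit phase-plane analysis of the Euler--Lagrange system \eqref{ele}. The space $\mc M$ is weak-$*$ compact, $\mc S$ is weak-$*$ lower semicontinuous (as the integral of a convex function of the density) and $\mc H$ is weak-$*$ continuous, so $\mc F_\beta$ attains its infimum. Any minimizer, after accounting for the Lagrange multipliers associated with $\rho_A+\rho_B+\rho_C=1$ and the fixed-mean constraints, satisfies $\delta\mc F_\beta/\delta\rho_\alpha=\mathrm{const}$; differentiating in $r$ yields \eqref{ele}, and standard bootstrapping from $\rho\in L^\infty$ gives $\rho\in C^\infty$ and periodicity on $\mb T$.

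Next I would analyze \eqref{ele} as a planar Hamiltonian flow on the simplex $\{\rho_A+\rho_B+\rho_C=1\}$. Summing the three equations confirms the sum constraint is preserved; a direct cancellation shows that $K:=\rho_A\rho_B\rho_C$ is also conserved, so orbits are the level curves of $K$ inside the simplex. The unique interior equilibrium is $\upbar\rho$ (corresponding to $K=1/27$); linearizing \eqref{ele} at $\upbar\rho$ on $\{\sum u_\alpha=0\}$ gives the skew matrix with eigenvalues $\pm i\beta/\sqrt{3}$, hence infinitesimal orbits have period $2\pi\sqrt{3}/\beta=\beta_\mathrm{c}/\beta$. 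The boundary of the simplex consists of heteroclinic connections joining the three vertex equilibria, so the minimal period $T(K;\beta)$ of the orbit at level $K$ diverges as $K\downarrow 0$. The analytic core is to prove that $K\mapsto T(K;\beta)$ is strictly monotone: a natural route is to eliminate $\rho_B,\rho_C$ via $\rho_B+\rho_C=1-\rho_A$ and $\rho_B\rho_C=K/\rho_A$, reducing to the first-order equation $(\rho_A')^2=\beta^2\rho_A^2\bigl[(1-\rho_A)^2-4K/\rho_A\bigr]$ whose period is an elliptic integral that can be differentiated in $K$. Granted monotonicity, the equation $T(K;\beta)=1$ has a unique solution iff $\beta_\mathrm{c}/\beta<1$, i.e.\ $\beta>\beta_\mathrm{c}$; imposing condition (iv) then fixes the phase and identifies $\rho^{*,\beta}$.

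Finally I would identify which critical points actually minimize $\mc F_\beta$. For $\beta\le\beta_\mathrm{c}$ every non-constant orbit has period $\ge 2\pi\sqrt{3}/\beta\ge 1$, so it cannot be $1$-periodic and $\upbar\rho$ is the unique critical point, hence the unique minimizer by existence. For $\beta>\beta_\mathrm{c}$, a Fourier expansion of the second variation of $\mc F_\beta$ at $\upbar\rho$ diagonalizes in the $(\omega,\omega^2)$ eigenbasis of the cyclic permutation of species ($\omega=e^{2\pi i/3}$); the two nonzero eigenvalues at Fourier mode $k$ become $9\pm 3\sqrt{3}\,\beta/(2\pi k)$, and the lower one at $k=1$ is negative precisely when $\beta>\beta_\mathrm{c}$. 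Hence $\upbar\rho$ is not even a local minimum, and by translation invariance of $\mc F_\beta$ every $\tau_s\rho^{*,\beta}$ attains the same value of the free energy; for $\beta\in(\beta_\mathrm{c},2\beta_\mathrm{c}]$ no other $1$-periodic critical point exists, so the family $\{\tau_s\rho^{*,\beta}\}$ is the minimizer set, while for $\beta>2\beta_\mathrm{c}$ the period-$1/n$ solutions with $n\ge 2$ must additionally be shown, by a scaling comparison of $\mc F_\beta$ along these branches, to carry strictly larger free energy. The main obstacle throughout is the monotonicity of $T(K;\beta)$ in $K$, which, together with the careful enumeration of all period-$1$ critical points, is the nontrivial input one extracts from the detailed analysis in \cite{Ayetal}.
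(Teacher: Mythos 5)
The paper never proves this statement: Theorem~\ref{t:mfe} is imported wholesale, as an explicit rewriting of Theorems~4.1 and 5.2 of \cite{Ayetal}. Your proposal is therefore, in effect, a reconstruction of that reference's analysis, and its skeleton is the right one (existence of minimizers by compactness and lower semicontinuity, derivation of \eqref{ele}, the conserved quantity $\rho_A\rho_B\rho_C$, period $\beta_\mathrm{c}/\beta$ at the center $\upbar\rho$, divergence of the period near the boundary of the simplex, counting $1$-periodic orbits through the period map, instability of $\upbar\rho$ past $\beta_\mathrm{c}$). But as a proof it has two genuine holes. First, the analytic crux --- strict monotonicity of $K\mapsto T(K;\beta)$ --- is precisely the hard content, and you explicitly defer it to \cite{Ayetal}; without it you get neither uniqueness of the nontrivial $1$-periodic orbit for $\beta>\beta_\mathrm{c}$ nor the strict inequality $T>\beta_\mathrm{c}/\beta\ge 1$ needed to exclude nonconstant critical points for $\beta\le\beta_\mathrm{c}$, so the argument does not stand on its own (and if one is allowed to quote \cite{Ayetal} for this, one may as well quote the theorem itself, as the paper does). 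Second, statement (ii) covers all $\beta>\beta_\mathrm{c}$, so for $\beta>2\beta_\mathrm{c}$ you must show the period-$1/n$ critical points are not global minimizers; you only gesture at ``a scaling comparison''. Such a comparison can be made to work: if $\rho$ has minimal period $1/n$ with $n\ge2$, the dilated competitor $\sigma(r):=\rho(r/n)$ has $\mc S(\sigma)=\mc S(\rho)$ while the nonconstant part of $\mc H$ is multiplied by $n$, so either $\mc H(\rho)\ge\mc H(\upbar\rho)$, in which case $\mc F_\beta(\rho)>\mc F_\beta(\upbar\rho)$, or $\mc F_\beta(\sigma)<\mc F_\beta(\rho)$; in both cases $\rho$ is not a minimizer. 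That argument (or an equivalent one from \cite{Ayetal}) is missing from your write-up and must be supplied.

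Two smaller glosses: deriving \eqref{ele} from minimality requires handling possible zeros of a minimizer (one-sided variations, after which the integrated Euler--Lagrange relation forces $\rho_\alpha$ to be bounded away from $0$ and smooth --- bootstrapping ``from $\rho\in L^\infty$'' alone is not quite the argument); and one should note that every $1$-periodic solution of \eqref{ele} automatically satisfies the mean-$1/3$ constraints (integrate $\rho_\alpha'/\rho_\alpha$ over a period), so no solutions are lost when matching critical points of $\mc F_\beta$ with orbits. The second-variation computation at $\upbar\rho$ is fine in spirit --- only the sign threshold at $\beta_\mathrm{c}=2\pi\sqrt3$ matters, not the exact constants --- and it is indeed needed to rule out $\upbar\rho$ as the minimizer for $\beta>\beta_\mathrm{c}$.
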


\subsection*{Law of large numbers for the empirical density}

As a corollary of the previous statements, we here prove the law of
large numbers for the sequence $\{\mc P_N^\beta\}$. The corresponding
limit point charges the set of minimizers of the free energy only. 
In the supercritical case we show that each $\tau_s\rho^{*,\beta}$,
$s\in\mb T$, is chosen with uniform probability.

\begin{theorem}$~$
\label{t:lln}
  \begin{itemize}
  \item[(i)] If $\beta\in [0,\beta_\mathrm{c}]$ then the sequence
    $\{\mc P_N^\beta\}$ converges to
    $\delta_{\upbar{\rho}}$.
  \item[(ii)] 
    If $\beta\in (\beta_\mathrm{c},+\infty)$ then the sequence $\{\mc
    P_N^\beta\}$ converges to $\int_0^1\!ds \: \delta_{\tau_s \rho^{*,\beta}} $.
  \end{itemize}
\end{theorem}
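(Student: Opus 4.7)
The plan is to combine the large deviations principle of Theorem~\ref{t:ld} with the characterization of minimizers of $\mc F_\beta$ in Theorem~\ref{t:mfe}, and then to exploit the translational symmetry of the model to pin down the limit in the supercritical case. Since $\mc M$ is a compact Polish space, $\ms P(\mc M)$ is compact under weak convergence, so it suffices to show that every subsequential weak limit of $\{\mc P_N^\beta\}$ has the claimed form.

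The first step is to localize any limit point on $\arginf \mc F_\beta$. Applying the LDP upper bound to an arbitrary closed $\mc C \subset \mc M$ disjoint from $\arginf \mc F_\beta$, the lower semicontinuity of $\mc F_\beta$ on the compact set $\mc M$ yields $\inf_{\mc C} \mc I_\beta > 0$, so $\mc P_N^\beta(\mc C) \to 0$ exponentially fast and any limit point is supported on $\arginf \mc F_\beta$. For part (i), Theorem~\ref{t:mfe}(i) identifies this set with $\{\upbar\rho\}$, which forces $\mc P_N^\beta \to \delta_{\upbar\rho}$.

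For part (ii) the support argument shows that every limit point $\mc P^\beta$ is carried by the circle $\mc K := \{\tau_s \rho^{*,\beta} : s \in \mb T\}$, and the remaining task is to identify which probability on $\mc K$ it is. The key ingredient is that the Hamiltonian $H_N$ is translation invariant on $\mb Z_N$, so $\nu_N^\beta$ is invariant under the unit shift; at the level of empirical densities this means $\mc P_N^\beta$ is invariant under $\tau_{1/N}$, and hence under $\tau_{k/N}$ for every $k \in \mb Z_N$. Using the joint continuity of $(s,\rho) \mapsto \tau_s \rho$ on $\mb T \times \mc M$ (which follows from the uniform $L^\infty$ bound on elements of $\mc M$ together with the $L^1$-continuity of translations of smooth test functions), I would approximate an arbitrary $s \in \mb T$ by $\lfloor sN\rfloor/N$ and pass to the limit to conclude that the limit point $\mc P^\beta$ is invariant under $\tau_s$ for every $s \in \mb T$. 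Condition (ii) in the definition of $\rho^{*,\beta}$ ensures that its minimal period is exactly one, so $s \mapsto \tau_s \rho^{*,\beta}$ is a homeomorphism from $\mb T$ to $\mc K$; pulling $\mc P^\beta$ back through it produces a translation invariant Borel probability on $\mb T$, which must be Lebesgue measure, yielding the stated representation.

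The main technical obstacle is the passage from the discrete invariance under $\tau_{k/N}$ to continuous invariance under $\tau_s$ for all $s \in \mb T$; this rests on verifying that translation acts jointly continuously on $\mc M$ equipped with the weak* topology, which is where the $L^\infty$ bound defining $\tilde{\mc M}$ plays a role. Everything else is soft: compactness of $\ms P(\mc M)$ together with the LDP upper bound to localize on minimizers, and, in the supercritical case, the topological identification of the unique translation invariant probability on a circle.
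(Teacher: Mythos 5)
Your proposal is correct and follows essentially the same route as the paper: localize subsequential limit points on $\arginf\mc F_\beta$ via the LDP upper bound, invoke Theorem~\ref{t:mfe} to identify that set, and in the supercritical case pass from the discrete $\tau_{k/N}$-invariance of $\mc P_N^\beta$ (inherited from translation invariance of $\nu_N^\beta$) to full $\tau_s$-invariance of the limit, then pull back along the bijection $s\mapsto\tau_s\rho^{*,\beta}$ and use uniqueness of Haar measure on $\mb T$. The paper implements the discrete-to-continuous step by noting that $F\circ\tau_{x_N/N}$ converges uniformly to $F\circ\tau_s$ for any continuous $F$ on the compact space $\mc M$, which is interchangeable with your appeal to joint weak* continuity of $(s,\rho)\mapsto\tau_s\rho$; both rest on the uniform $L^\infty$ bound on $\mc M$.
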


\begin{proof}
  Item (i) follows immediately from the large deviations principle
  stated in Theorem~\ref{t:ld} and the uniqueness of minimizers of
  $\mc F_\beta$ stated in item (i) of Theorem~\ref{t:mfe}.

  To prove item (ii), let $\vartheta:\Omega_N\to\Omega_N$ be the
  microscopic translation, i.e.\ $\vartheta \zeta$ is the
  configuration defined by $(\vartheta \zeta)(x) =\zeta(x-1)$,
  $x\in\mb Z_N$.  As follows from definition \eqref{gm}, the
  probability $\nu_N^\beta$ is translation invariant, i.e.\
  $\nu_N^\beta\circ \vartheta^{-1} = \nu_N^\beta$. This implies that
  the probability $\mc P_N^\beta$ is invariant by discrete
  translations: $\mc P_N^\beta \circ \tau_{x/N}^{-1}= \mc P_N^\beta$,
  $x\in \mb Z_N$.  By the compactness of $\mc M$, there exists a
  probability $\mc P\in\ms P(\mc M)$ and a subsequence $\{\mc
  P^\beta_N\}$ such that $\mc P_{N}^\beta\to \mc P$.  We claim that
  $\mc P$ is translation invariant. Indeed, fix a continuous function
  $F$ on $\mc M$ and $s\in \mb T$. Observe that, in view of the
  compactness of $\mc M$, $F$ is uniformly continuous.  Pick now a
  sequence $\{x_{N}\in \mb Z_{N}\}$ such that $x_{N}/N\to s$. The
  uniform continuity of $F$ implies that $\tau_{x_{N}/N} F$ converges
  uniformly to $\tau_s F$.  Since $\int\! d\mc P_N^\beta \,
  \tau_{x_N/N} F =\int\! d\mc P_N^\beta \, F$, by taking the limit
  $N\to \infty$ we deduce that $\int\! d\mc P \, \tau_s F =\int\! d\mc
  P \, F$. In view of the arbitrariness of $F$ we conclude $\mc P\circ
  \tau_s^{-1} =\mc P$.  Moreover, Theorem~\ref{t:ld} and item (ii) in
  Theorem~\ref{t:mfe} imply that the support of $\mc P$ is a subset of
  $\big\{ \tau_s \rho^{*,\beta} \, ,\: s\in \mb T \big\}=:\mc T$. Let
  now $\phi\colon \mc T \to \mb T$ be the bijection defined by
  $\tau_s\rho^{*,\beta}\mapsto s$ and set $\lambda:= \mc P
  \circ\phi^{-1}$. Since $\mc P= \mc P \circ \tau_s^{-1}$, $s\in\mb
  T$, we deduce that $\lambda$ is a translation invariant probability
  measure on $\mb T$. As the Lebesgue measure $dr$ is the unique
  translation invariant probability measure on $\mb T$ we deduce
  $\lambda(dr)=dr$. The proof is now completed by observing that for
  each continuous $F\colon \mc M \to \mb R$ the previous identity
  imply $\int\!d\mc P (\rho) \, F(\rho) = \int_0^1\!ds\,
  F(\tau_s\rho^*)$.
\end{proof}

\section{Lower bound on the spectral gap in the subcritical case}

In this section we prove the first statement in Theorem~\ref{t:mt}.
This result is derived from an analysis of a perturbed interchange
process, that is detailed in Appendix~\ref{s:a}, and a comparison of
the corresponding Dirichlet forms. This method has been introduced in
\cite{Q} and applied in different contexts, see e.g.\ \cite{bcdp}. 

We start by defining the ABC process on the complete graph with $N$
vertices.  Given a (unoriented) bond $\{x,y\}\subset \mb Z_N$, $x\neq
y$, and a function $f\colon \Omega_N \to \mb R$, we introduce the
gradient
\begin{equation}
  \label{nabla}
   \big(\nabla_{x,y} f \big) \, (\zeta) := f(\zeta^{x,y}) -f (\zeta)
\end{equation}
where, as in \eqref{ex}, $\zeta^{x,y}$ denotes the configuration obtained from
$\zeta$ exchanging the particles in $x$ and $y$. The ABC dynamics on
the complete graph is then defined by the Markov generator
\begin{equation}
\label{Lc=}
  \ms L_N^\beta f  :=  \sum_{\{x,y\}\subset \mb Z_N} c^\beta_{x,y} 
   \nabla_{x,y} f 
\end{equation}
where, recalling \eqref{ham}, the jump rates
$c^\beta_{x,y}=c_{x,y}^{\beta,N} \colon \Omega_N\to (0,+\infty)$ are given by 
\begin{equation}
  \label{ratlc}
  c_{x,y}^{\beta,N} := \frac 1N
  \exp\Big\{ -\frac{\beta N}{2} \, \nabla_{x,y} H_N \Big\}.
\end{equation}
In particular, the above rates satisfy the detailed balance with
respect to the probability measure $\nu_N^\beta$ defined in \eqref{gm}.
Recalling \eqref{rates} and \eqref{ham}, we also observe that
$N\, c_{x,x+1}^{\beta,N}=  c_x^{\beta,N}$.  

In Appendix~\ref{s:a} we prove that, provided $\beta$ is small
enough, the spectral gap of $\ms L_N^\beta$ is of order one uniformly
in $N$.

\begin{lemma}
  \label{t:cor}
  There exist constants $\beta_0,C_1 \in (0,+\infty)$ such that for any
  $\beta\in [0,\beta_0]$ and any $N\ge 3$
  \begin{equation*}
    \gap\big( \ms L_{N}^\beta \big)  \ge  
    \frac {1}{C_1} \cdot 
  \end{equation*}
\end{lemma}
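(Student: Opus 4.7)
The proof is perturbative around $\beta=0$. The plan is to proceed in three steps: (i) establish the spectral gap at $\beta=0$, (ii) quantify the size of the rate perturbation, and (iii) transfer the estimate from $\beta=0$ to small positive $\beta$, this last step being the content of Appendix~\ref{s:a}.

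\smallskip\noindent\emph{Step 1.}
At $\beta=0$ the rates $c_{x,y}^{0,N}$ reduce to the constant $1/N$ and $\nu_N^0$ is uniform on $\Omega_N$. Hence $\ms L_N^0$ is, up to the projection on species labels, the generator of the interchange process on the complete graph $K_N$ with rates $1/N$ per edge. By classical results on random transpositions on $S_N$ --- equivalently, by the Caputo--Liggett--Richthammer theorem combined with the easy fact that the random walk on $K_N$ with uniform rate $1/N$ has spectral gap $1$ --- this interchange process has spectral gap bounded below by $1$, and projecting on a subspace of invariant functions cannot decrease the gap. Thus $\gap(\ms L_N^0)\ge 1$ uniformly in $N$.

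\smallskip\noindent\emph{Step 2.}
A short computation from \eqref{ham} shows that
\[
 |N\,\nabla_{x,y}H_N(\zeta)|\le c^*
\]
uniformly in $\zeta\in\Omega_N$, in the pair $\{x,y\}\subset\mb Z_N$, and in $N$: exchanging the particles at sites $x$ and $y$ affects only the $O(N)$ pairs in \eqref{ham} containing $x$ or $y$, each of order $1/N^2$. Consequently the rates satisfy $e^{-\beta c^*/2}/N\le c_{x,y}^{\beta,N}(\zeta)\le e^{\beta c^*/2}/N$ and the one-step detailed-balance ratios $\nu_N^\beta(\zeta^{x,y})/\nu_N^\beta(\zeta)$ lie in $[e^{-\beta c^*},e^{\beta c^*}]$.

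\smallskip\noindent\emph{Step 3.}
The plan is to compare the Dirichlet form of $\ms L_N^\beta$ under $\nu_N^\beta$ with that of the unperturbed interchange process using the local bounds of Step~2. Writing the variance as $\tfrac12\sum_{\zeta,\zeta'}\nu_N^\beta(\zeta)\nu_N^\beta(\zeta')\bigl(f(\zeta)-f(\zeta')\bigr)^2$ and decomposing $f(\zeta)-f(\zeta')$ as a telescoping sum along paths of single transpositions in $\Omega_N$, one should combine the $\beta=0$ Poincar\'e inequality from Step~1 with the perturbative ratio bounds from Step~2 to deduce
\[
 \nu_N^\beta(f,f)\le C_1\,\nu_N^\beta\bigl(f\,(-\ms L_N^\beta)f\bigr),
\]
provided $\beta$ is small enough that the accumulated perturbative losses remain bounded.

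\smallskip\noindent\emph{Main obstacle.}
The principal difficulty is that the global density $d\nu_N^\beta/d\nu_N^0\propto \exp\{-\beta N H_N\}$ is not uniformly bounded in $N$, so a direct Holley--Stroock perturbation of the $\beta=0$ Poincar\'e inequality is not available. The route above exploits that, although the global ratio varies exponentially with $N$, the \emph{local} one-step ratios in Step~2 are $O(1)$ uniformly. Promoting this local control into a uniform spectral bound requires a careful canonical-paths or randomized-flow argument on $\Omega_N$, adapted to the mean-field structure of $H_N$, which keeps the cumulative weight loss bounded when $\beta$ is small; this synthesis is what I expect to be the main technical step carried out in Appendix~\ref{s:a}.
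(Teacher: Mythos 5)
Your Steps 1 and 2 do match the setup of the paper's argument: the reduction of $\ms L_N^\beta$ to a colorblind projection of a perturbed interchange process on $S_N$ with energy $E_N=N\,H_N\circ\chi_N$, the uniform bound $\sup_{N,a}\Vert\nabla_aE_N\Vert_\infty<+\infty$, and the order-one gap of the unperturbed process are exactly the ingredients used there. But the heart of the lemma --- a gap of order one for small $\beta>0$, \emph{uniformly in} $N$ --- is precisely what you leave as a plan in Step 3, and the plan as sketched would fail. In the telescoping/canonical-path decomposition of the variance, a path of transpositions joining $\zeta$ to $\zeta'$ has length of order $N$; although each single step changes $\beta N H_N$ only by $O(\beta)$, an intermediate configuration $\xi$ can have energy differing from that of the endpoints by $O(\beta N)$, so the weights $\nu_N^\beta(\zeta)\nu_N^\beta(\zeta')/\nu_N^\beta(\xi)$ produced after Cauchy--Schwarz are only controlled by $e^{O(\beta N)}$. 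The ``accumulated perturbative losses'' are therefore not bounded uniformly in $N$ for any fixed $\beta>0$: the obstruction you correctly identify for Holley--Stroock reappears along the path, and no choice of paths or flows obviously avoids it. So the key idea is missing, not merely deferred.

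What the appendix actually does is different in kind and needs only your local Step 2 bounds: a Bochner/$\Gamma_2$-type argument following \cite{bcdp}. One proves $\pi_N\big[(\mc G_Nf)^2\big]\ge k\,\mc E_N(f,f)$ by expanding the square over pairs of transpositions $a,b$. For disjoint $a,b$, detailed balance and the identity $c_a(c_b+c_b^a)=c_b(c_a+c_a^b)$ show that the symmetric part equals $\tfrac14\,\pi_N\big[c_a(c_b+c_b^a)(\nabla_a\nabla_b f)^2\big]\ge0$, while the remaining part is bounded by $C\beta\sum_a\pi_N\big[c_a(\nabla_af)^2\big]$ using $\Vert\nabla_a\nabla_bE_N\Vert_\infty=O(1)$. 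For intersecting $a,b$ one sums over triangles and invokes a purely local estimate (Lemma~\ref{lem:per}): conditionally on the colors outside a triangle the measure is within a factor $C_4(\beta)\to1$ of uniform, and the three-site interchange process has gap $1$. Everything is local, so only one- and two-step energy increments enter; no global comparison of $\nu_N^\beta$ with $\nu_N^0$ and no long paths are ever needed. To complete your proof you must supply this (or an equivalent) mechanism in Step 3; as written, the statement is not proved.
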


We denote by $D_N^\beta$ and $\ms D_N^\beta$ the Dirichlet forms
associated to the generators $L_N^\beta$ and $\ms L_N^\beta$,
respectively. That is, given $f\colon \Omega_N\to \mb R$, 
\begin{align}
  \label{forma}
  D_N^\beta(f,f) &:= \nu_N^\beta \big(f \, (-L_N^\beta) f\big) 
  = \frac 12 \sum_{x=1}^{N}
  \nu_N^\beta \big( c^\beta_x \, \big[ \nabla_{x,x+1} f \big]^2 \big),
  \\
  \label{formastorta}
  \ms D_N^\beta(f,f) &:= \nu_N^\beta \big(f \, (-\ms L_N^\beta) f\big) 
  = \frac 1{2} \sum_{\{x,y\}\subset\mb Z_N}
  \nu_N^\beta \big( c^\beta_{x,y} \, \big[ \nabla_{x,y} f \big]^2 \big).
\end{align}

\begin{lemma}
  \label{t:cam}
  The inequality
  \begin{equation*}
     \ms D_N^\beta(f,f) \le 2 \,e^{3\beta} \, N^2\, D_N^\beta(f,f)
  \end{equation*}
  holds for any $\beta\ge 0$ and any function $f\colon \Omega_N\to \mb R$.
\end{lemma}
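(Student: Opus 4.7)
The plan is to use the standard path-comparison technique for Dirichlet forms: realize each long-range exchange as a composition of adjacent exchanges, apply a telescoping/Cauchy-Schwarz bound, and then absorb the Radon-Nikodym factors into a small multiplicative constant of the form $e^{c\beta}$.

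First, for each unordered pair $\{x,y\}\subset\mb Z_N$ with $x\ne y$, orient along the shorter arc of the ring and use the standard factorization of the transposition $(x,y)$ into $L_{xy}=2\,d(x,y)-1 \le N$ adjacent transpositions:
\begin{equation*}
(x,y)=(x,x{+}1)(x{+}1,x{+}2)\cdots(y{-}1,y)(y{-}2,y{-}1)\cdots(x,x{+}1).
\end{equation*}
Write the $i$-th adjacent edge as $(z_i,z_i{+}1)$ and set $\sigma_i=(z_i,z_i{+}1)\circ\cdots\circ(z_1,z_1{+}1)$ acting on $\Omega_N$, so that $\sigma_{L_{xy}}\zeta=\zeta^{x,y}$. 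A telescoping identity plus Cauchy-Schwarz then gives the pointwise bound
\begin{equation*}
[\nabla_{x,y}f(\zeta)]^2 \;\le\; L_{xy}\sum_{i=1}^{L_{xy}}[\nabla_{z_i,z_i+1}f(\sigma_{i-1}\zeta)]^2.
\end{equation*}

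Second, for each $i$ I would perform the bijective change of variables $\xi:=\sigma_{i-1}\zeta$ inside the $\nu_N^\beta$-integral and compare the resulting weight $\nu_N^\beta(\sigma_{i-1}^{-1}\xi)\,c^{\beta,N}_{x,y}(\sigma_{i-1}^{-1}\xi)$ to $\nu_N^\beta(\xi)\,c^{\beta,N}_{z_i,z_i+1}(\xi)$. Two arithmetic estimates are needed, both elementary consequences of \eqref{ham}, \eqref{rates} and \eqref{ratlc}. (a) Each adjacent exchange changes $H_N$ by exactly $\pm 1/N^2$ (this is encoded in $Nc^{\beta,N}_{x,x+1}=c^{\beta,N}_x$ and the fact that $c^{\beta,N}_x\in\{e^{\pm\beta/(2N)}\}$); summing along the path yields $|H_N(\sigma_{i-1}\zeta)-H_N(\zeta)|\le L_{xy}/N^2\le 1/N$, hence a Radon-Nikodym factor at most $e^{\beta}$. (b) A short calculation directly from \eqref{ham}, using the equal-density constraint so that all terms outside the shorter arc from $x$ to $y$ cancel, shows $|\nabla_{x,y}H_N|\le C\,d(x,y)/N^2\le C/N$, hence $c^{\beta,N}_{x,y}/c^{\beta,N}_{z,z+1}\le e^{c\beta}$. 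Combining yields a pointwise comparison $\nu_N^\beta c^{\beta,N}_{x,y}(\sigma_{i-1}^{-1}\xi)\le e^{c'\beta}\,\nu_N^\beta c^{\beta,N}_{z_i,z_i+1}(\xi)$ for an explicit numerical $c'$.

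Third, I would sum over all unordered pairs and rearrange the triple sum $\sum_{\{x,y\}}L_{xy}\sum_i(\cdot)$ as a sum over adjacent edges weighted by the total contribution of all paths through each such edge. Each adjacent bond $\{z,z{+}1\}$ is traversed at most twice by any given path, and a direct count gives $\sum_{\{x,y\}:\,\{z,z+1\}\in\gamma_{x,y}}L_{xy}=O(N^3)$. Finally, using once more $c^{\beta,N}_{z,z+1}=c^{\beta,N}_z/N$ converts $\sum_z \nu^\beta_N(c^{\beta,N}_{z,z+1}[\nabla_{z,z+1}f]^2)$ into $(2/N)\,D_N^\beta(f,f)$, which absorbs one factor of $N$ from the combinatorial count and produces the announced $N^2$ pre-factor.

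The one place where real care is required is the constant-tracking in the third step: both the ratio of rates and the Radon-Nikodym factor along the path must stay bounded by $e^{O(\beta)}$ with explicit constants summing to at most $3$. Everything else is routine path-counting. I would expect the ``correct'' constant to be noticeably smaller than $2e^{3\beta}$, but the slightly generous bound stated in the lemma is fully sufficient for the application (proving $\gap(L_N^\beta)\ge C_0/N^2$ together with Lemma~\ref{t:cor}).
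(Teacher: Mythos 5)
Your proposal is correct and follows essentially the same path-comparison strategy as the paper: decompose the long-range exchange $T_{x,y}$ into $2d(x,y)-1$ adjacent transpositions, telescope, apply Cauchy--Schwarz, and bound the resulting Radon--Nikodym factors using $\|\nabla_{z,z+1}H_N\|_\infty \le 1/N^2$ together with $|\nabla_{x,y}H_N| \le 1/N$, then do the combinatorial count. The only cosmetic difference is that you orient along the shorter arc (path length $\le N$) whereas the paper uses the path from $x$ to $y$ with $1\le x<y\le N$ (length $\le 2N$), which costs the paper a factor $e^{2\beta}$ where you get $e^\beta$ for the Radon--Nikodym contribution; this is why you correctly expect a slightly better constant than the stated $2e^{3\beta}$, and the paper's more generous bound is, as you note, sufficient for the application.
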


\begin{proof}
  Given $\{x,y\}\subset \mb Z_N$ we let $T_{x,y}\colon \Omega_N\to
  \Omega_N$ be the involution defined by $T_{x,y}\zeta :=
  \zeta^{x,y}$. We use the same notation for the corresponding linear
  map on the set of functions $f\colon \Omega_N\to \mb R$, i.e.\ 
  $\big(T_{x,y} f\big) (\zeta) := f(T_{x,y}\zeta)=f\big(\zeta^{x,y}\big)$. 
  As it is simple to check, the long jump $T_{x,y}$ can be decomposed
  in terms of nearest neighbor jumps as follows
  \begin{equation*}
    T_{x,y} = T_{x+1,x} \, T_{x+2,x+1}\, \cdots \, T_{y-1,y-2} \, T_{y-1,y}
    \, T_{y-2,y-1}\, \cdots \, T_{x+1,x+2} \,T_{x,x+1}.
  \end{equation*}
  We then write the telescopic sum 
  \begin{equation*}
    \begin{split}
      &
      T_{x,y} f - f 
      \\
      &\quad 
      =
      \big[ 
      T_{x,x+1}  \cdots T_{y,y-1}  \cdots T_{x+2,x+1} T_{x+1,x} \, f
      \:-\: 
      T_{x,x+1} \cdots T_{y,y-1} \cdots T_{x+2,x+1} \, f
      \big]
      \\
      &\quad\; +
      \big[ 
      T_{x,x+1} \cdots T_{y,y-1} \cdots T_{x+3,x+2} T_{x+2,x+1} \, f
      \:-\: 
      T_{x,x+1} \cdots T_{y,y-1} \cdots T_{x+3,x+2} \, f
      \big]
      \\
      &\quad\; +\:\cdots\:+
      \big[ 
      T_{x,x+1} \cdots T_{y-2,y-1} T_{y,y-1} \, f
      \:-\: 
      T_{x,x+1} \cdots T_{y-2,y-1}\, f
      \big]
      \\
      &\quad\; 
      +\: 
      \cdots
      +\:
      \big[ T_{x,x+1} f - f\big].
    \end{split}
  \end{equation*}
  Whence 
  \begin{equation}
    \label{tele}
    \begin{split}
      &\nabla_{x,y} f (\zeta)      
      =
      \big(\nabla_{x,x+1} f\big)
      \big( T_{x+1,x} \cdots T_{y-1,y}\cdots T_{x+1,x+2}\zeta\big)
      \\
      &\qquad
      +
      \big(\nabla_{x+1,x+2} f\big)
      \big( T_{x+1,x} \cdots T_{y-1,y}\cdots T_{x+2,x+3}\zeta\big)
      \\
      &\qquad
      +\: 
      \cdots
      +\:
      \big(\nabla_{y-1,y} f\big)
      \big( T_{x+1,x} \cdots T_{y-2,y-1}\zeta\big)
      +\: 
      \cdots
      +\:
      \big(\nabla_{x,x+1} f\big) (\zeta).
    \end{split}
  \end{equation}

  In view of \eqref{ham} and \eqref{gm}, for any $\beta\in\mb R_+$, 
  any $z\in \mb Z_N$, and any positive function $g\colon \Omega_N\to \mb R_+$ 
  \begin{equation}
    \label{jac}
    \nu_N^\beta \big( T_{z,z+1} g \big) \le 
    \exp\big\{ N\beta \|\nabla_{z,z+1} H_N\|_\infty\big\} 
    \,  \nu_N^\beta ( g )
    \le e^{\beta/N} \,  
    \nu_N^\beta (g)
  \end{equation}
  where $\|\cdot\|_\infty$ denotes the uniform norm. 
  By Schwarz inequality in \eqref{tele} and using recursively the
  previous estimate we then get, for $1\le x<y\le N$ 
  \begin{equation}
    \label{sw}
    \nu_N^\beta\big( \big[\nabla_{x,y} f\big]^2 \big) \le 
    2\, [ 2 (y-x)-1 ] \, e^{2\beta} 
    \sum_{z=x}^{y-1} \nu_N^\beta\big( \big[\nabla_{z,z+1} f\big]^2 \big).
  \end{equation}
  Indeed, in the generic term on the right hand side of
  \eqref{tele} there is the composition of nearest neighbor
  exchanges $T_{x+k,x+k+1}$ whose number is at most $2(y-x)-2 \le 2
  N$.  In view of \eqref{jac} this yields the factor $e^{2\beta}$.  As
  the number of terms on the right hand side of \eqref{tele} is $2
  (y-x)-1$ and each bond $\{z,z+1\}$, $z=x,\cdots,y-1$ is used at most
  two times, the bound \eqref{sw} follows. 

  To conclude the proof of the lemma it is now enough to observe that
  the jump rates in the Dirichlet forms \eqref{forma} and
  \eqref{formastorta} respectively satisfy the bounds $c_x^{\beta}\ge
  e^{-\beta/(2N)}$ and $N c_{x,y}^{\beta} \le e^{\beta/2}$. In view of
  \eqref{sw} elementary computations now yield the statement.
\end{proof}

\begin{proof}[Proof of Theorem~\ref{t:mt}, item (i).]
  Recall the Rayleigh-Ritz variational characterization of the
  spectral gap \eqref{rr}. By Lemmata \ref{t:cor} and \ref{t:cam}
  we then deduce the statement with $\beta_0$ as in Lemma~\ref{t:cor} and
  $C_0 = \frac 12 e^{-3\beta_0} C_1$.
\end{proof}

\section{Upper bound on the spectral gap in the supercritical case}

We discuss here the upper bound on the spectral gap when
$\beta>\beta_\mathrm{c}$. In view of the Rayleigh-Ritz 
variational characterization \eqref{rr}, the proof will be
achieved by exhibiting a suitable test function. 
The naive picture is the following. When $\beta>\beta_\mathrm{c}$ and
$N$ is large, the ABC process essentially performs a random walk on the
set of minimizers of the free energy $\mc F_\beta$, which in
the supercritical case is homeomorphic to the one-dimensional
torus. We thus choose as test function the one that corresponds to the
slow mode of such random walk and conclude the argument.

\begin{proof}[Proof of Theorem~\ref{t:mt}, item (ii).]
Pick a Lipschitz function $\phi\colon\mb T\to\mb R$ such that
$\int_0^1\!dr\, \phi(r)=0$ to be chosen later and  
let $f_N\colon \Omega_N \to\mb R$ be the function 
\begin{displaymath}
  f_N :=
  \frac{1}{N}\sum_{x=1}^N\eta_B(x) \phi\big(\tfrac{x}{N}\big).
\end{displaymath}
By the Rayleigh-Ritz principle,
\begin{equation}
  \label{urr}
  \gap(L_N^\beta) \le 
  \frac{D_N^\beta(f_N,f_N)}{\nu_N^\beta \big(f_N ,f_N\big)}
\end{equation}
where the Dirichlet form $D_N^\beta$ has been defined in 
\eqref{forma}. We next estimate from below the denominator and from
above the numerator. 

To bound the variance of $f_N$, we first observe that, since $\phi$
has mean zero, we have  $\lim_{N}\, \nu_N^\beta \big(f_N\big)=
\frac 13 \int_0^1\!dr\, \phi(r)=0$.  
Recall \eqref{mcm} and let $F\colon\mc M \to \mb R$ be defined by 
\begin{equation*}
  F (\rho) = \int_0^1\!dr\, \rho_B(r) \,\phi(r).
\end{equation*}
The continuity of $\phi$ implies  
\begin{equation*}
  \lim_{N\to\infty}\sup_{\zeta\in\Omega_N} \:
  \big|  f_N(\zeta) - F\big(\pi_N(\zeta)\big) \big| =0,
\end{equation*}
where the empirical density $\pi_N\colon \Omega_N\to \mc M$
has been defined in \eqref{2:ed}. 
By assumption, $\beta>\beta_\mathrm{c}$ and therefore
Theorem~\ref{t:lln}, item (ii) implies  
\begin{equation*}
  \lim_{N\to \infty} \nu_N^\beta\big( f_N^2\big) =  
  \lim_{N\to \infty} \nu_N^\beta\big( F(\pi_N)^2\big)=
  \int_0^1\!ds \, \Big[ \int_0^1\!dr\, \rho^{*,\beta}_B(r-s)\phi(r) \Big]^2.
\end{equation*}
We can choose $\phi$ such that the right hand side above is strictly
positive. It is indeed enough to observe that, since $\rho^{*,\beta}$
is not constant,  
there exists a Lipschitz, mean zero, function $\phi$ such that 
$\int_0^1\!dr\, \rho^{*,\beta}_B(r)\phi(r)\neq 0$. 
For such choice of $\phi$ we deduce there exists a constant
$C(\beta)\in (0,+\infty)$ such that  $\nu_N^\beta \big(f_N,f_N\big)\ge
C(\beta)$ for any $N\ge 3$.  

We next bound the Dirichlet form. A straightforward computation yields 
\begin{equation*}
  \nabla_{x,x+1}f_N 
  = \frac{1}{N} 
  \Big[ \phi\big(\tfrac {x+1}N\big) -\phi\big(\tfrac {x}N\big)\Big]
  \, \big[ \eta_B(x) -\eta_B(x+1)\big] .
\end{equation*}
Since $c_x^{\beta,N} \le \exp\big\{\frac{\beta}{2N}\big\}$, we then get 
\begin{equation*}
  D_N^\beta(f_N,f_N) \le
  \, \exp\big\{\tfrac{\beta}{2N}\big\}
  \, \frac{1}{2N^2}  
  \sum_{x=1}^N 
  \Big[\phi\big(\tfrac {x+1}N\big) -\phi\big(\tfrac{x}N\big)\Big]^2 .
\end{equation*}
Therefore, letting $C_\phi$ be the Lipschitz constant of $\phi$,
\begin{equation*}
  \varlimsup_{N\to\infty} N^3 \, D_N^\beta(f_N,f_N) 
  \le \frac 12  \, C_\phi^2
\end{equation*}
which concludes the proof.
\end{proof}

\appendix
\section{Spectral gap for perturbed interchange processes}
\label{s:a}

We prove here a general result on the spectral gap on suitable
Markov chains on the set of permutations of $\{1,\dots,N\}$. 
The jumps of this chain are obtained by  randomly choosing a
transposition. As reference process we consider the so-called 
interchange process on $\{1,\dots,N\}$, see \cite{CLR,DS}. 
This process can be realized as the simple random walk on the graph
with vertex set given by the symmetric group $S_N$ and edges given by
the pairs $(\sigma_1,\sigma_2)\in S_N\times S_N$ such that the
composition $\sigma_1^{-1}\circ \sigma_2$ is a transposition. 
The reference invariant measure is thus the uniform
probability on the symmetric group. We then perturb this measure
according to the standard Gibbs formalism and consider an associated
reversible chain. Under general conditions on the energy, we show that
-- at high enough temperature -- the relaxation time of perturbed
chain behaves, for large $N$, as the one of the reference random walk.
The ABC dynamics on the complete graph \eqref{Lc=} can be realized by
looking at the previous chain in a colorblind way, that is resolving
only $3$ out of the $N$ colors.

Let $V_N:=\{1,\dots,N\}$ and $B_N:=\{b\subset V_N: |b|=2\}$. The
complete graph on $N$ vertices is $G_N:=(V_N,B_N)$ and 
$S_N:=\{\sigma:V_N\to V_N, \text{bijective}\}$ is the set of
permutations on $V_N$.  For $\sigma\in S_N$, $\{x,y\}\in B_N$
let $\sigma^{\{x,y\}}\in S_N$ be the permutation obtained 
by composing $\sigma$ with the transposition which exchanges $x$ and
$y$ 
\begin{displaymath}
  \sigma^{\{x,y\}}(z):=
  \begin{cases}
    \sigma(y) & \text{if $z=x$} \\
    \sigma(x) & \text{if $z=y$} \\
    \sigma(z) & \text{otherwise.}
  \end{cases}
\end{displaymath}
Given an \emph{energy function} $E_N\colon S_N\to\mb R$ 
and  $\beta\geq 0$, we define the probability on $S_N$ by 
\begin{displaymath}
  \pi_N(\sigma)=\pi_N^\beta(\sigma):=
  \frac{1}{Z_N^\beta}\exp\big\{-\beta E_N(\sigma)\big\}.
\end{displaymath}
where $Z_N^\beta$ is the normalization constant.  
For $f\colon S_N\to \mb R$, $a\in B_N$ define
$f^{a}(\sigma):=f(\sigma^{a})$, $\nabla_{a}f:=f^{a}-f$
and the Markov generator $\mc G_{N} = \mc G_{N}^\beta$  by
\begin{equation}
  \label{genG}  
  \mc G_{N} f := \sum_{a\in B_N}c_a\nabla_af 
\end{equation}
where the transition rates are 
\begin{equation}
  \label{ratN}
  c_a=c_{a}^{\beta,N}:=
  \frac{1}{N}\exp\big\{-\tfrac{\beta}{2}\, \nabla_a E_N\big\}.
\end{equation}
The associated Markov chain satisfies the detailed balance 
with respect to the probability $\pi_N$, i.e.,
\begin{equation}
  \label{eq:db}
  \pi_N\big( c_a g \big) =\pi_N\big( c_a g^a \big).
\end{equation}

The operator $\mc G_{N}$ is selfadjoint in $L^2(S_N,d\pi_N)$, the
corresponding Dirichlet form is
\begin{displaymath}
  \mathcal{E}_N(f,f):=
  \pi_{N}\big(f (-\mc G_{N}) f \big)=
  \frac{1}{2}\sum_{a\in B_N}\pi_{N}\big[c_{a} \, (\nabla_a f)^2\big].
\end{displaymath}

We next show that for $\beta$ small enough the spectral gap of 
$\mc G_{N}^\beta$ is strictly positive uniformly in $N$.

\begin{theorem}
  \label{t:a1}
  Assume $\sup_N\sup_{a}\Vert \nabla_a E_N\Vert_\infty<+\infty$.
  Then there exist $\beta_0,K_0> 0$ such that for any
  $\beta\in[0,\beta_0]$ and any $N$
  \begin{equation}
    \label{gapG}
    \frac 1{K_0}\leq
    \gap\big(\mc G_{N}^\beta\big)
    \leq K_0.
  \end{equation}
\end{theorem}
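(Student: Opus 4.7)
For the inequality $\gap(\mc G_N^\beta)\le K_0$, apply the Rayleigh--Ritz variational principle to a test function depending on a single coordinate, e.g.\ $f(\sigma):=\phi(\sigma(1))$ for a fixed non-constant $\phi\colon V_N\to\mb R$. Only the $N-1$ transpositions $\{1,y\}$ give a non-zero gradient $\nabla_a f$, and each carries rate $c_a^{\beta,N}$ of order $1/N$, so $\mathcal{E}_N(f,f)=O(1)$; the variance $\pi_N^\beta(f,f)$ is $\Theta(1)$ for a suitable $\phi$ thanks to the uniform bound $M:=\sup_N\sup_a\|\nabla_a E_N\|_\infty<+\infty$, which prevents the marginal of $\sigma(1)$ under $\pi_N^\beta$ from degenerating.

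\textbf{Lower bound.} The plan has two ingredients. At $\beta=0$ the generator $\mc G_N^0$ is the standard interchange process on the complete graph $K_N$ with rate $1/N$ per edge, reversible with respect to the uniform measure on $S_N$. By the Caputo--Liggett--Richthammer theorem (the resolution of Aldous's conjecture, \cite{CLR}), the spectral gap of this $N!$-dimensional chain coincides with that of a single random walk on $K_N$ carrying the same edge rates; the latter is explicit, equal to $1$ for every $N$. Equivalently,
\begin{equation*}
  \pi_N^0(f,f)\;\le\;\frac{1}{2N}\sum_{a\in B_N}\pi_N^0\big((\nabla_a f)^2\big).
\end{equation*}

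The second ingredient is a perturbative transfer of this Poincar\'e inequality from $\pi_N^0$ to $\pi_N^\beta$ for $\beta$ small. The rates are pointwise comparable: $c_a^{\beta,N}/c_a^{0,N}=\exp(-\beta\nabla_a E_N/2)\in[e^{-\beta M/2},e^{\beta M/2}]$, so it is enough to show $\pi_N^\beta(f,f)\le \tfrac{C}{N}\sum_a\pi_N^\beta\big((\nabla_a f)^2\big)$ with $C$ independent of $N$. A pointwise Holley--Stroock comparison between $\pi_N^\beta$ and $\pi_N^0$ is \emph{not} admissible since $E_N$ may have oscillation of order $N$. I would therefore interpolate along the one-parameter family $\pi_t:=\pi_N^{\beta t}$, $t\in[0,1]$, and track
\begin{equation*}
  G(t):=\sup_{f}\frac{\pi_t(f,f)}{\tfrac{1}{N}\sum_{a\in B_N}\pi_t\big((\nabla_a f)^2\big)},
\end{equation*}
with $G(0)\le 1/2$ by \cite{CLR}. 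Although $\partial_t\log\pi_t(\sigma)=-\beta(E_N(\sigma)-\pi_t(E_N))$ is globally of order $N$, its single-swap variation $\nabla_a\partial_t\log\pi_t=-\beta\nabla_a E_N$ is uniformly bounded by $\beta M$; exploiting this local boundedness inside the interchange Dirichlet form should produce a differential inequality $G'(t)\le c(M)\,G(t)$, and Gr\"onwall then gives $G(1)\le e^{c(M)\beta}/2$, uniform in $N$ for $\beta$ small enough.

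\textbf{Main obstacle.} The crux is converting the \emph{local} per-swap control $\|\nabla_a E_N\|_\infty\le M$, which is all the hypothesis provides, into a \emph{global} Poincar\'e comparison $G(1)=O(1)$ without incurring $N$-dependent constants. The multiplier $E_N-\pi_t(E_N)$ entering the interpolation fluctuates on the scale $N$, and one must use reversibility together with the symmetric structure of the interchange process to ensure that these fluctuations contribute only through per-swap differences of order $M$. This is the same sort of comparison performed in \cite{bcdp} for related mean-field generators, and the careful adaptation to the perturbed interchange process on $S_N$ is the main technical burden of the appendix; once it is in place, the full bound \eqref{gapG} follows by combining the resulting uniform Poincar\'e inequality with the trivial bound on the rates.
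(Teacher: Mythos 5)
Your upper bound is essentially the paper's argument: a test function depending only on $\sigma(1)$, a $\Theta(1)$ variance obtained from the uniform control on $\nabla_a E_N$, and an $O(1)$ Dirichlet form from the $O(1/N)$ rates. (The paper uses $f(\sigma)=\ind_{\{1\}}(\sigma(1))$, which is just a particular choice of your $\phi$.)

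The lower bound, however, is where your proposal departs from the paper and where the gap lies. You invoke the Caputo--Liggett--Richthammer theorem to get the $\beta=0$ Poincar\'e inequality and then propose to transport it to $\pi_N^\beta$ by interpolating in $t$ along $\pi_t=\pi_N^{\beta t}$ and closing a Gr\"onwall inequality $G'(t)\le c(M)G(t)$. You have correctly identified the obstacle --- that $\partial_t\log\pi_t=-\beta(E_N-\pi_t(E_N))$ is globally of order $N$ even though its per-swap differences are order $M$ --- but you have not resolved it, and it is not clear that the interpolation can be made to close. Differentiating the variance produces a covariance $\mathrm{Cov}_{\pi_t}(E_N,(f-\pi_t f)^2)$ in which $E_N$ genuinely fluctuates on scale $\sqrt{N}$; Cauchy--Schwarz together with a Poincar\'e inequality for $E_N$ itself gives $\pi_t(E_N,E_N)=O(N)$, and the conjugate factor involves fourth moments of $f$ that are not controlled by the Dirichlet form. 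The reversibility and symmetry you gesture at would need to be deployed in a precise way to kill the extra factor of $N$, and that step is not supplied. In other words, the sentence ``should produce a differential inequality'' is exactly where the whole technical burden sits, so as written the lower bound is not proved.

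The paper avoids interpolation entirely. It works with the fixed measure $\pi_N^\beta$ and uses the Bakry--Emery/$\Gamma_2$ identity $\gap(\mc G_N)=\inf_f \pi_N[(\mc G_N f)^2]/\mc E_N(f,f)$. Expanding $\pi_N[(\mc G_N f)^2]=\sum_{a,b}\pi_N[c_ac_b\nabla_af\nabla_bf]$, the diagonal and intersecting-pair terms are handled by restricting to triangles $T\in\mc T_N$ and applying, on each triangle, the spectral gap of the interchange process on three vertices (a $6\times 6$ matrix computed by hand, gap $=1$) after showing the conditional law $\pi_N^\beta[\,\cdot\,|\,\sigma(z):z\notin\tilde T]$ is uniformly comparable to uniform for small $\beta$; the disjoint-pair cross terms are shown, by a symmetrization using detailed balance and the identity $c_a(c_b+c_b^a)=c_b(c_a+c_a^b)$, to contribute a nonnegative $\Gamma_2$-type quantity plus an error of order $\beta$. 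In particular the paper does \emph{not} need CLR; the uniform-in-$N$ Poincar\'e inequality at $\beta=0$ drops out as a by-product of the triangle decomposition. This is the key structural idea missing from your proposal.
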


\begin{proof}[Proof of the upper bound]
  In view of the variational characterization \eqref{rr} of the
  spectral gap $\mc G_{N}$, it is enough to exhibit a suitable test
  function. 
  We next show that by choosing $f(\sigma)=\ind_{\{1\}}(\sigma(1))$, the
  upper bound in \eqref{gapG} follows. 

  The variance of $f$ is $\pi_N(f,f)= \pi_N\big(\sigma(1)=1\big)
  \big[1-\pi_N\big(\sigma(1)=1\big)\big]$. 
  To compute the Dirichlet form, we first observe that
  $\nabla_a f_N $ vanishes if $1\not\in a$. On the other hand, if
  $a=\{1,y\}$ for some $y\in\{2,\dots,N\}$
  \begin{equation*}
    \big[ \nabla_{\{1,y\}} f_N (\sigma) \big]^2= 
    \ind_{\{1\}}(\sigma(1)) +\ind_{\{1\}}(\sigma(y)).
  \end{equation*} 
  Whence, in view of \eqref{ratN},
  \begin{equation*}
    \begin{split}
      \mathcal{E}_N(f,f) & =
      \frac{1}{2} \sum_{y}\pi_N\Big(  
        c_{\{1,y\}} \big[ \ind_{\{1\}} (\sigma(1))
        +\ind_{\{1\}}(\sigma(y)) \big] \Big)
      \\ 
      & \leq \exp\Big\{\frac{\beta}{2} \sup_{N,a}\Vert \nabla_a
        E_N\Vert_\infty\Big\}
      \, \pi_N\big(\sigma(1)=1\big).
    \end{split}
  \end{equation*}
  Therefore
    \begin{displaymath}
    \gap(\mc G_{N})\leq
    \frac{\exp\Big\{ \frac{\beta}{2}
      \sup_{N,a}\Vert \nabla_a E_N\Vert_\infty\Big\} 
      } {1-\pi_N\big(\sigma(1)=1\big)}\cdot 
  \end{displaymath}
  It remains to show that the denominator above is bounded away from
  0 uniformly in $N$. We claim that
  \begin{equation}
    \label{eq:uni}
    \frac{1}{N}\exp\Big\{ 
    -\beta\sup_{N,a} \Vert\nabla_a E_N \Vert_\infty\Big\} 
    \leq
    \pi_N\big(\sigma(1)=1\big)
    \leq
    \frac{1}{N}\exp\Big\{ \beta\sup_{N, a}\Vert \nabla_a E_N\Vert_\infty\Big\}.
  \end{equation}
  Indeed, fix $x\in V_N$ and observe 
  \begin{multline*}
    \pi_N\big(\sigma(1)=1\big)=
    \frac{1}{Z_N}\sum_\sigma\pi_N(\sigma)\ind_{\{1\}} (\sigma(1))
    = \frac{1}{Z_N}\sum_\sigma\pi_N(\sigma^{\{1,x\}})
    \ind_{\{1\}}(\sigma(x))
    \\
    =\frac{1}{Z_N}
    \sum_\sigma\pi_N(\sigma)
    \frac{\pi_N(\sigma^{\{1,x\}})}{\pi_N(\sigma)}\ind_{\{1\}}(\sigma(x))
    = \pi_N\Big( e^{-\beta \, \nabla_{\{1,x\}}E_N}\ind_{\{1\}}(\sigma(x))\Big).
  \end{multline*}
  This yields
  \begin{displaymath}
    \exp\Big\{-\beta\sup_{N,a}\Vert \nabla_a E_N\Vert_\infty\Big\}
    \leq
    \frac{\pi_N\big(\sigma(x)=1\big)}{\pi_N\big(\sigma(1)=1\big)}
    \leq
    \exp\big\{\beta\sup_{N,a}\Vert \nabla_a E_N\Vert_\infty\Big\}.
  \end{displaymath}
  Summing over $x\in V_N$ and observing that $\sum_{x}
  \pi_N\big(\sigma(x)=1\big)=1$ we get \eqref{eq:uni}. 
\end{proof} 

\begin{proof}[Proof of the lower bound]
  The proof is based on the $\Gamma_2$ approach, see e.g.\ \cite{BE}, as
  adapted to the context of interacting particles systems in
  \cite{bcdp}. The starting point is the observation, which follows
  from the spectral theorem, 
  that $\gap(\mc G_{N})$ is the largest constant $k$ such that for
  any $f\colon S_N\to\mb R$
  \begin{equation}
    \label{eq:gap2}
    \pi_N\big[ (\mc G_N f)^2\big] 
    \geq {k} \, \mc E_N (f,f) = \frac{k}{2}
    \sum_{a\in B_N}\pi_N\left[c_a(\nabla_af)^2\right].
  \end{equation}

  To prove the lower bound in \eqref{gapG} it is therefore enough
  to show there exist a constant $k$ independent of $N$ such that
  \eqref{eq:gap2} holds.  We proceed in two steps. We first show that
 \begin{equation}
    \label{eq:gapp1}
    \begin{split}
    &\pi_N\big[(\mc G_Nf)^2\big]
    \\
    &\qquad \geq
    \sum_{\substack{ a,b\in B_N \\ a\cap b\not=\emptyset}}
    \pi_N\big[c_a c_b \nabla_a f \nabla_bf\big] 
    +\frac{1}{2}
    \sum_{\substack{ a,b\in B_N \\ a\cap b=\emptyset}}
    \pi_N\Big[c_ac_b\Big(1-\frac{c_b^a}{c_b}\big)\nabla_a f\nabla_b f\Big].
    \end{split}
  \end{equation}
  Then we prove there exists a constant $k$ independent of $N$ such
  that 
  \begin{equation}
   \label{eq:gapp2}
   \begin{split}
     &\frac{k}{2}\sum_{a\in B_N}\pi_N\big[c_a(\nabla_af)^2\big]
     \\
     &\qquad\le 
     \sum_{\substack{ a,b\in B_N \\ a\cap b\not=\emptyset}}
     \pi_N\big[c_ac_b\nabla_af\nabla_bf\big]
    +\frac{1}{2}
    \sum_{\substack{ a,b\in B_N \\ a\cap b=\emptyset}}
    \pi_N\Big[c_ac_b\Big(1-\frac{c_b^a}{c_b}\Big)\nabla_a f\nabla_b
    f\Big].
   \end{split}
  \end{equation}

  While the inequality \eqref{eq:gapp1} can be obtained as a
  consequence of Corollary~2.3 and Proposition~2.4 in
  \cite{bcdp}, we next give a direct proof in the present setting.
  Observe that
  \begin{equation}
    \label{3pezzi} 
    \begin{split}
      \pi_N\big[(\mc G_{N}f)^2\big] &=
      \sum_{a,b\in B_N}
      \pi_N\big[c_ac_b\nabla_a f\nabla_b f\big]\\
      &= \sum_{a\cap b\not=\emptyset}
      \pi_N\big[c_ac_b\nabla_a f\nabla_b f\big]
      +\sum_{a\cap b=\emptyset}\pi_N\big[c_ac_b\nabla_a f\nabla_b f\big].
    \end{split}
  \end{equation}
  We rewrite the last term as
  \begin{equation*}
    \begin{split}
      &\sum_{a\cap b=\emptyset}\pi_N\big[c_ac_b\nabla_a f\nabla_b f\big]
      \\ &\qquad
      =\frac{1}{2}
      \sum_{a\cap b=\emptyset}
      \pi_N\Big[c_ac_b\Big(1-\frac{c_b^a}{c_b}\Big)
      \nabla_a f\nabla_b f\Big]
      + \frac{1}{2}\sum_{a\cap b=\emptyset}
      \pi_N\Big[c_ac_b\Big(1+\frac{c_b^a}{c_b}\Big)
      \nabla_a f\nabla_b f\Big].
   \end{split}
  \end{equation*}
  We claim that the last term on the right hand side above 
  is positive. This statement together with \eqref{3pezzi} 
  trivially implies \eqref{eq:gapp1}. 
  To prove the previous claim, fix $a,b\in B_N$, with $a\cap
  b=\emptyset$ and observe that in this case 
  $(\nabla_a f)^a(\nabla_b f)^a=-\nabla_a
  f\nabla_b f^a$. 
  The detailed balance condition \eqref{eq:db} now implies
  \begin{equation*}
    \begin{split}
      &\pi_N\Big[c_ac_b\Big(1+\frac{c_b^a}{c_b}\Big)
      \nabla_a f\nabla_b f\Big]
      = \pi_N\Big[c_a\Big(c_b+c_b^a\Big)\nabla_a f\nabla_b f\Big]
      \\
      &\qquad=
      \pi_N\big[c_a\big(c_b^a+c_b\big) (\nabla_a f)^a (\nabla_b f)^a\big]
      =- \pi_N \big[c_a\big(c_b^a+c_b\big)
      \nabla_a f\nabla_b f^a\big].
    \end{split}
  \end{equation*}
  Then
  \begin{equation}
    \label{eq:1}
    \begin{split}
      \pi_N\Big[c_ac_b\Big(1+\frac{c_b^a}{c_b}\Big)
      \nabla_a f\nabla_b f\Big]
      & =
      \frac{1}{2}\, \pi_N\Big[c_a\Big(c_b+c_b^a\Big)
      \big( \nabla_a f\nabla_b f -\nabla_a f\nabla_b f^a\big)\Big]
      \\
      & =-\frac{1}{2}\, \pi_N\big[c_a\big(c_b+c_b^a\big)
      \nabla_a f\nabla_a\nabla_b f \big],
    \end{split}
  \end{equation}
  Furthermore, by direct computation,
  \begin{equation}
    \label{eq:sim}
    c_a\big(c_b+c_b^a\big)=c_b\big(c_a+c_a^b\big).
  \end{equation}
  By using \eqref{eq:1}, \eqref{eq:sim}, detailed balance
  \eqref{eq:db}, and \eqref{eq:sim} again  we obtain
  \begin{equation*}
    \begin{split}
     & \pi_N\Big[c_ac_b\Big(1+\frac{c_b^a}{c_b}\Big)
     \nabla_a f\nabla_b f\Big]\\
     &\qquad
     =-\frac{1}{2}\, \pi_N\big[ c_a\big(c_b+c_b^a\big)
     \nabla_a f\nabla_a\nabla_b f \big]
     = -\frac{1}{2}\, 
     \pi_N\big[ c_b \big(c_a+c_a^b\big) 
     \nabla_a f\nabla_a\nabla_b f \big]\\
     &\qquad
     =\frac{1}{2}\, \pi_N\big[c_b\big(c_a+c_a^b\big)
     \nabla_a f^b\nabla_a\nabla_b f \big]
     = \frac{1}{2}\,\pi_N\big[ c_a \big(c_b+c_b^a\big)
     \nabla_a f^b\nabla_a\nabla_b f \big].
    \end{split}
  \end{equation*}
  By averaging the previous equation and \eqref{eq:1} we get
  \begin{equation*}
    \begin{split}
     \pi_N\Big[c_ac_b\Big(1+\frac{c_b^a}{c_b}\Big)
     \nabla_a f\nabla_b f\Big] 
     &=
    \frac{1}{4}\, \pi_N\big[ c_a\big(c_b+c_b^a\big)
    \big(\nabla_a f^b-\nabla_a f\big) \nabla_a\nabla_b f \big]
    \\
    & =\frac{1}{4}\, \pi_N\big[c_a\big(c_b+c_b^a\big)
    (\nabla_a\nabla_b f)^2 \big]\geq 0
    \end{split}
  \end{equation*}
  which concludes the proof of the claim.

  \medskip 
  In order to prove \eqref{eq:gapp2} we observe that
  \begin{equation}
    \label{eq:2}
    \sum_{a\cap b\not=\emptyset}\pi_N\big[c_ac_b\nabla_a f\nabla_b f\big]=
    \sum_{a}\pi_N\big[c_a^2(\nabla_a
      f)^2\big]+\sum_{\substack{a\cap
        b\not=\emptyset\\a\not=b}}\pi_N\big[c_ac_b\nabla_a f\nabla_b
      f\big]. 
  \end{equation}
  Furthermore, given $a,b\in B_N$ such that $a\cap b\not=\emptyset$
  and $a\not=b$ there exists a unique \emph{triangle} $T$ such that
  $a,b\in T$.  A triangle here is an element of
  \begin{displaymath}
    \mathcal{T}_N:=\big\{\{a,b,c\}\subset B_N:|\{a,b,c\}|=3, a\cap
      b\not=\emptyset, a\cap c\not=\emptyset, b\cap
      c\not=\emptyset\big\}. 
  \end{displaymath}
 Therefore
  \begin{equation*}
    \sum_{\substack{a\cap
        b\not=\emptyset\\a\not=b}}\pi_N\big[c_ac_b\nabla_a f\nabla_b
      f\big]= 
    \sum_{T\in\mathcal{T}_N}
    \sum_{\substack{a,b\in T\\ a\not=b}}
    \pi_N\big[c_ac_b\nabla_a f\nabla_b f\big].
  \end{equation*}
  Note that
  \begin{equation*}
    \begin{split}
    &\sum_{T\in\mathcal{T}_N} \sum_{\substack{a,b\in T\\ a\not=b}}
    \pi_N\big[c_ac_b\nabla_a f\nabla_b f\big]
    \\
    &\qquad
    =\sum_{T\in\mathcal{T}_N}\sum_{a,b\in T}
    \pi_N\big[c_ac_b\nabla_a f\nabla_b f\big]
    -\sum_{T\in\mathcal{T}_N}\sum_{a\in T}
    \pi_N\big[c_a^2(\nabla_a f)^2\big]\\
    &\qquad=
    \sum_{T\in\mathcal{T}_N}\sum_{a,b\in T}
    \pi_N\big[c_ac_b\nabla_a f\nabla_b f\big]
    -\sum_{a} \big|\{T\in\mathcal{T}_N: T\ni a\}\big| \, 
    \pi_N\big[c_a^2(\nabla_a f)^2\big]\\
    &\qquad=\sum_{T\in\mathcal{T}_N}
    \sum_{a,b\in T}\pi_N \big[c_ac_b\nabla_a f\nabla_b f\big]
    -(N-2)\sum_{a}\pi_N\big[c_a^2(\nabla_a f)^2\big].
    \end{split}
  \end{equation*}
  By plugging this result in \eqref{eq:2} we get 
  \begin{displaymath}
    \sum_{a\cap b\not=\emptyset}\pi_N\big[c_ac_b\nabla_a f\nabla_b f\big]=
    \sum_{T\in\mathcal{T}_N}\sum_{a,b\in T}\pi_N\big[c_ac_b\nabla_a
    f\nabla_b f\big]-(N-3)\sum_{a}\pi_N\big[c_a^2(\nabla_a f)^2\big]. 
  \end{displaymath}
  For any $T\in\mathcal{T}_N$ define the set of vertexes of $T$ as
  $\tilde T:=\bigcup_{a\in T}a$. 
  Then
  \begin{equation}
    \label{tobepl}
    \sum_{a,b\in T}\pi_N\big[c_ac_b\nabla_a f\nabla_b f\big]=
    \pi_N\Big[\sum_{a,b\in T}\pi_N\big[c_ac_b\nabla_a f\nabla_b
    f \,\big|\,\sigma(z):z\not\in \tilde T\big]\Big]. 
  \end{equation}

  We prove in Lemma~\ref{lem:per} below that there exists a constant 
  $C_1(\beta)>0$ satisfying $\lim_{\beta\downarrow 0} C_1(\beta)=1$
  such that for any $N\geq 3$, any $f \colon S_N\to \mb R$, 
  and any $\sigma\in S_N$
  \begin{displaymath}
    \frac{N}{3}\sum_{a,b\in T}
    \pi_N\big[c_ac_b\nabla_a f\nabla_b f\,\big|\, 
    \sigma(z):z\not\in \tilde T\big]
    \geq 
    \frac{C_1(\beta)}{2} 
    \sum_{a\in T}\pi_N\big[c_a(\nabla_a f)^2 \, \big|\, 
    \sigma(z):z\not\in \tilde T\big]. 
  \end{displaymath}
  By plugging this bound into \eqref{tobepl}, we deduce 
  \begin{equation}
    \label{1.5}
    \begin{split}
    & \sum_{a\cap b\not=\emptyset}\pi_N\big[c_ac_b\nabla_a f\nabla_b
    f\big]
    \\
    &\qquad 
    \geq
    \frac{3C_1(\beta)}{2N}\sum_{T\in\mathcal{T}_N}\sum_{a\in
      T}\pi_N\big[c_a(\nabla_a
    f)^2\big]-(N-3)\sum_{a}\pi_N\big[c_a^2(\nabla_a f)^2\big]
    \\ 
    &\qquad=\frac{3C_1(\beta)(N-2)}{2N}\sum_{a}\pi_N\big[c_a(\nabla_a
    f)^2\big]-(N-3)\sum_{a}\pi_N\big[c_a^2(\nabla_a f)^2\big]
    \\ 
    &\qquad\geq\Big(\frac{3C_1(\beta)(N-2)}{2N}-(N-3)
    \sup_a\Vert c_a\Vert_\infty\Big)
    \sum_{a}\pi_N\big[c_a(\nabla_a f)^2\big].
    \end{split}
  \end{equation}
  In view of \eqref{ratN}, 
  \begin{equation}
    \label{brN}
    \sup_a\Vert c_a\Vert_\infty\leq
    \frac{1}{N}\exp\Big\{\frac{\beta}{2}\sup_{N,a}\Vert\nabla_a
      E_N\Vert_\infty\Big\}.
  \end{equation}
  Recalling the hypotheses $\sup_N\sup_{a}\Vert \nabla_a
  E_N\Vert_\infty<+\infty$, 
  from \eqref{1.5} we then deduce there exists a constant
  $C_2(\beta)>0$ satisfying $\lim_{\beta\downarrow 0} C_2(\beta)=1$ such
  that 
  \begin{equation}
    \label{eq:2primo}
    \sum_{a\cap b\not=\emptyset}\pi_N\big[c_ac_b\nabla_a f\nabla_b f\big]\geq
    \frac{C_2(\beta)}{2}\sum_{a}\pi_N\big[c_a(\nabla_a f)^2\big].
  \end{equation}

  To conclude the proof of \eqref{eq:gapp2} we show that the second
  term on its right hand side is, for $\beta$ small enough, of
  order $\beta$.  By Schwarz inequality and \eqref{eq:sim}
  \begin{equation*}
    \begin{split}
    & \Big| \frac{1}{2} 
    \sum_{\substack{ a,b\in B_N \\ a\cap b=\emptyset}}
    \pi_N\Big[c_ac_b\Big(1-\frac{c_b^a}{c_b}\Big)
    \nabla_a f\nabla_b f\Big]\Big|\leq
    \frac{1}{2}\sum_{ a\cap b=\emptyset}
    \pi_N\Big[c_ac_b \Big|1-\frac{c_b^a}{c_b}\Big|  
    \, |\nabla_a f| \, |\nabla_b f| \Big] 
    \\
    &\quad 
    \leq\frac{1}{4}
    \Big\{
    \sum_{ a\cap b=\emptyset} 
    \pi_N\Big[c_ac_b\Big|1-\frac{c_b^a}{c_b}\Big| 
    (\nabla_a f)^2 \Big]
    +\sum_{ a\cap b=\emptyset}
    \pi_N\Big[ c_a c_b \Big|1-\frac{c_b^a}{c_b}\Big| 
    (\nabla_b f)^2 \Big]\Big\}
    \\
    &\quad 
     =\frac{1}{2}\sum_{ a\cap b=\emptyset}
    \pi_N\Big[ c_a c_b\Big|1-\frac{c_b^a}{c_b}\Big| (\nabla_a f)^2 \Big]
    =\frac{1}{2}
    \sum_a\pi_N\Big[c_a(\nabla_a f)^2 
    \sum_{ b\,:\, b\cap a=\emptyset}c_b\Big|1-\frac{c_b^a}{c_b}\Big|\Big].
    \end{split}
  \end{equation*}

  The hypotheses of the theorem implies 
  $\sup_N\sup_{a,b}\Vert \nabla_a \nabla_b E_N\Vert_\infty<+\infty$. 
  Recalling \eqref{brN}, for $\beta$ small enough we then have
  \begin{equation*}
    \begin{split}
      \sum_{ b\,:\, b\cap a=\emptyset} c_b 
      \Big|1-\frac{c_b^a}{c_b}\Big|
      & \leq
      \sup_a\Vert c_a\Vert_\infty 
      \sum_{ b\,:\, b\cap a=\emptyset} 
      \big|1-e^{-\frac{\beta}{2}\nabla_a\nabla_b E_N}\big|
      \\
      &\leq\beta\sup_a\Vert c_a\Vert_\infty
      \sum_{ b\,:\, b\cap a=\emptyset}
      \big|\nabla_a\nabla_b E_N\big| 
      \leq C_3\beta
    \end{split}
  \end{equation*}
  for some constant $C_3$ independent of $N$. Therefore
  \begin{displaymath}
    \frac{1}{2}\sum_{\substack{ a,b\in B_N \\ a\cap b=\emptyset}} 
    \pi_N\Big[c_ac_b\Big(1-\frac{c_b^a}{c_b}\Big)\nabla_a f\nabla_b f\big]  
    \geq -\frac{C_3 \beta}{2}\sum_{a}\pi_N\big[c_a(\nabla_a f)^2\big],
  \end{displaymath}
  which together with \eqref{eq:2primo}  completes the proof
  of \eqref{eq:gapp2}.
\end{proof}

  \begin{lemma}
    \label{lem:per}
    Assume $\sup_N\sup_{a}\Vert \nabla_a E_N\Vert_\infty<+\infty$.
    Then there exists a constant $C_1(\beta)$ satisfying
    $\lim_{\beta\downarrow 0} C_1(\beta)=1$ such that for any
    $N\geq3$, any $T\in \mc T_N$, any $f\colon S_N\to \mb R$, and any
    $\sigma\in S_N$
    \begin{displaymath}
      \frac{N}{3}\sum_{a,b\in T}
      \pi_N\big[ c_ac_b\nabla_a f\nabla_b f 
        \, \big|\, \sigma(z) : z\not\in \tilde T\big]
        \geq 
        \frac{C_1(\beta)}{2}
        \sum_{a\in T}
        \pi_N\big[ c_a(\nabla_a f)^2 \, \big| \, 
        \sigma(z):z\not\in \tilde T\big]
    \end{displaymath}
    where we recall $\tilde T:=\bigcup_{a\in T}a$.
  \end{lemma}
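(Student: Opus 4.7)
The plan is to condition on $\sigma|_{V_N\setminus \tilde T}$, which reduces the statement to a $\Gamma_2$-type spectral inequality on the finite group $S_3$, and then to handle the $\beta=0$ case by an explicit spectral computation and the small $\beta>0$ case by a compactness/perturbation argument. Since $|\tilde T|=3$ and $T$ consists of the three transpositions on $\tilde T$, once $\sigma|_{V_N\setminus \tilde T}$ is fixed, $\sigma|_{\tilde T}$ takes values in the six-point set of bijections from $\tilde T$ onto the residual set $V_N\setminus \sigma(V_N\setminus \tilde T)$, which I identify with $S_3$. Let $\hat\pi$ denote the conditional law of $\sigma|_{\tilde T}$; the restricted dynamics generated by $\hat{\mc G}:=\sum_{a\in T}c_a\nabla_a$ is reversible with respect to $\hat\pi$ by restriction of \eqref{eq:db}. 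Setting $\tilde c_a:=Nc_a$ and $\tilde{\mc G}:=\sum_{a\in T}\tilde c_a\nabla_a$, a direct rescaling shows that the desired inequality is equivalent to the $\Gamma_2$-type estimate
\[
  \hat\pi\big[(\tilde{\mc G}f)^2\big]\;\ge\;\frac{3\,C_1(\beta)}{2}\sum_{a\in T}\hat\pi\big[\tilde c_a(\nabla_af)^2\big]
\]
on the six-dimensional space $L^2(S_3,\hat\pi)$.

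At $\beta=0$ one has $\tilde c_a\equiv 1$ and $\hat\pi$ is uniform on $S_3$, so $\tilde{\mc G}$ is the generator of the classical interchange process on three letters. Decomposing the regular representation of $S_3$ into its irreducibles (trivial, sign, and two copies of the two-dimensional standard representation), one checks that the non-zero eigenvalues of $\tilde{\mc G}$ are $-3$ with multiplicity $4$ (standard) and $-6$ with multiplicity $1$ (sign); in particular $\gap(\tilde{\mc G})=3$. By the spectral theorem, for every $f$ orthogonal to the constants one has
\[
  \hat\pi[(\tilde{\mc G}f)^2]
  \;\ge\; 3\,\hat\pi\big[f(-\tilde{\mc G})f\big]
  \;=\; \tfrac{3}{2}\sum_{a\in T}\hat\pi\big[(\nabla_af)^2\big],
\]
which is the desired inequality with $C_1(0)=1$.

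For small $\beta>0$ I would close the argument by continuity on a compact set of parameters. Setting $M:=\sup_{N,a}\|\nabla_aE_N\|_\infty<+\infty$, one has $\tilde c_a\in[e^{-\beta M/2},e^{\beta M/2}]$; moreover, since any two configurations on $\tilde T$ differ by at most two elementary swaps in $T$, $\hat\pi(\sigma)/\hat\pi(\sigma')\in[e^{-2\beta M},e^{2\beta M}]$ for all $\sigma,\sigma'\in S_3$. Thus, uniformly in $N$, in the triangle $T$, and in the outside conditioning, the data $(\hat\pi,(\tilde c_a)_{a\in T})$ lies in a compact subset of the finite-dimensional space of reversible Markov data on $S_3$ and shrinks to the $\beta=0$ reference point as $\beta\to 0$. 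On the fixed six-dimensional space $L^2(S_3)$ the quadratic forms $Q_1(f):=\hat\pi[(\tilde{\mc G}f)^2]$ and $Q_2(f):=\sum_a\hat\pi[\tilde c_a(\nabla_af)^2]$ depend continuously on these parameters and $Q_2$ is uniformly strictly positive on the hyperplane $\ind^\perp$; hence the generalised Rayleigh quotient $\inf_{f\perp\ind}Q_1(f)/Q_2(f)$ is a continuous function of the parameters and converges to $3/2$ as $\beta\to 0$, producing a constant $C_1(\beta)\to 1$ that is uniform in $N$, $T$, and the conditioning. The main obstacle is precisely this uniformity: it is essential that $L^2(S_3)$ is a fixed six-dimensional space independent of $N$ and of the local data, so that the problem reduces to a finite-dimensional generalised eigenvalue problem whose parameters are controlled uniformly by $M$, and that the non-degenerate gap $3$ of the unperturbed operator $\tilde{\mc G}$ matches exactly the factor $3/2$ in the statement.
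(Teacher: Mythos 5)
Your proof is correct and follows essentially the same route as the paper: condition on $\sigma$ outside $\tilde T$ to reduce to the six-point fiber, use the exact spectrum of the rate-one interchange generator on three letters (the paper's $6\times 6$ matrix, rescaled, with gap $3$), and perturb for small $\beta$ using the uniform bounds that $\sup_{N,a}\|\nabla_a E_N\|_\infty<\infty$ gives on the conditional measure and rates. The only real difference is the final step: the paper transfers the $\beta=0$ bound through explicit comparison constants (conditional measure within a factor $C_4(\beta)$ of uniform, rates within $e^{\pm\beta M/2}$ of $1/N$), producing an explicit $C_1(\beta)$, while you bound the generalized Rayleigh quotient $Q_1/Q_2$ on the fixed six-dimensional space by a soft continuity/compactness argument, which yields a non-explicit but equally valid $C_1(\beta)\to 1$ uniformly in $N$, $T$ and the conditioning.
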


  \begin{proof}
    The argument relies on two ingredients. The first is that, given a
    triangle $T\in\mc T_N$, the conditional probability 
    $\pi_N^\beta[\cdot \,|\, \sigma(z):\not\in \tilde T]$ is, for
    $\beta$ small enough, close to the uniform measure. 
    Namely,  there exist $C_4(\beta)$ satisfying
    $\lim_{\beta\downarrow 0} C_4(\beta)=1$ independent of $N$,
    $T\in\mc T_N$, and $\sigma\in S_N$, such that  
    \begin{equation}
      \label{eq:equ}
      \frac{1}{C_4(\beta)}\leq
      \frac{\pi_N^\beta\big[\sigma \,\big|\, 
        \sigma(z):z\not\in \tilde T \big]}
      {\pi_N^0\big[\sigma\,\big|\,\sigma(z):z\not\in \tilde T\big]} 
      \leq C_4(\beta).
    \end{equation}  
    The second ingredient is that the spectral gap of the interchange
    process on a graph with $3$ vertices is equal to $1$. 
    This statement readily implies 
    \begin{equation}
      \label{eq:gaps3}
      \pi_N^0\big[ f,f \,\big|\, \sigma(z): z\not\in \tilde T] \big]
      \leq \frac{1}{6} 
      \sum_{a\in T}\pi_N^0\big[(\nabla_af)^2 \,\big| \,\sigma(z):z
      \not\in \tilde T\big]. 
    \end{equation}

    We first show that \eqref{eq:equ} and \eqref{eq:gaps3} imply the thesis. 
    Since $\pi_N^0\big(\cdot \,\big|\, \sigma(z):z\not\in \tilde T\big)$ is the
    uniform measure on a set of cardinality $6$, in view of
    \eqref{eq:gaps3}
    \begin{equation*}
    \begin{split}
     &\pi_N^\beta\big[f,f\,\big|\, \sigma(z):z\not\in \tilde T\big]
     \leq
    C_4(\beta)^2 
    \pi_N^0\big[f,f \,\big|\, \sigma(z):z\not\in \tilde T\big]
    \\
    &\qquad \leq
    \frac{C_4(\beta)^2}{6}
    \sum_{a\in T}
    \pi_N^0\big[(\nabla_af)^2\,\big|\,\sigma(z):z\not\in \tilde
    T\big]
    \\ 
    &\qquad \leq\frac 13 C_4(\beta)^3 N
    e^{\frac{\beta}{2}\sup_{N,a} \Vert \nabla_a E_N \Vert_\infty}
    \, \frac 12 \sum_{a\in T}\pi_N^\beta\big[c_a(\nabla_af)^2\,\big|\,
    \sigma(z):z\not\in \tilde T\big].
    \end{split}
  \end{equation*}
  Whence, by using the characterization of the spectral gap given in
  \eqref{eq:gap2}, 
  \begin{equation*}
    \begin{split}
    & \sum_{a,b\in T} 
    \pi_N^\beta\big[c_ac_b\nabla_af\nabla_bf \,\big|\, 
    \sigma(z):z\not\in \tilde T\big]\\
    & \qquad \geq \frac{3}{C_4(\beta)^3 \, N} 
      \, \exp\big\{- \tfrac{\beta}{2} \sup_{N,a} \Vert \nabla_a E_N
      \Vert_\infty\big\}
    \, \frac 12 \sum_{a\in T}
    \pi_N^\beta\big[c_a(\nabla_af)^2\,\big|\,\sigma(z):z\not\in \tilde T\big]
    \end{split}
  \end{equation*}
  which, for a suitable $C_1(\beta)$, is the thesis of the lemma. 

  The estimate \eqref{eq:equ} follows from standard arguments. Firstly
  note that for any $a\in T$
  \begin{displaymath}
    \frac{\pi_N^\beta \big[\sigma^a \,\big|\, 
      \sigma(z):z\not\in \tilde T\big]}
    {\pi_N^\beta \big[\sigma\,\big|\, \sigma(z):z\not\in \tilde T\big]}= 
    \exp\big\{-\beta\, \nabla_a E_N(\sigma) \big\}.
  \end{displaymath}
  Therefore, by observing that any two given permutations in $S_3$ can
  be connected at most by two transpositions, a telescopic argument yields
  that for any $\sigma,\sigma'\in S_N$ such that
  $\sigma(z)=\sigma'(z)$ for $z\not\in\tilde T$
  \begin{displaymath}
    \exp\big\{-2\beta \, \sup_{N,a} \Vert\nabla_a
    E_N\Vert_\infty\big\}
    \leq 
    \frac{\pi_N^\beta\big[\sigma^\prime\,\big|\, \sigma^\prime(z):z\not\in \tilde
      T\big]}
    {\pi_N^\beta \big[\sigma \,\big|\, \sigma(z):z\not\in \tilde
      T \big]}
    \leq 
     \exp\big\{2\beta\, \sup_{N,a}\Vert\nabla_a E_N\Vert_\infty\big\}.
  \end{displaymath}
  By averaging the above inequality over $\sigma'$ the bound \eqref{eq:equ}
  follows.

  The spectral gap of the interchange process on $\{1,2,3\}$ can be
  deduced from the general results in \cite{CLR,DS}. An elementary
  proof can however also be obtained by writing out the $6\times 6$  
  matrix corresponding to the generator and computing its
  eigenvalues, as in the example at page 50 of \cite{Di}. 
  We order the $6$ permutations of $S_3$ as
  $\binom{123}{123}$, $\binom{123}{132}$, $\binom{123}{231}$, 
  $\binom{123}{213}$, $\binom{123}{312}$, and $\binom{123}{321}$. 
  With this choice, the generator of the interchange process is
  represented by the matrix  
  \begin{equation*}
    \begin{pmatrix}
      -1 & \phantom{-}\frac 13 & \phantom{-} 0 & \phantom{-}\frac 13
      &\phantom{-} 0 & 
      \phantom{-}\frac 13 \\ 
      \phantom{-}\frac 13 & -1 & \phantom{-}\frac 13 & \phantom{-} 0 &
      \phantom{-}\frac 13  &\phantom{-} 0  \\ 
      \phantom{-} 0 & \phantom{-}\frac 13 & -1 & \phantom{-}\frac 13 &
      \phantom{-} 0 & 
      \phantom{-}\frac 13  \\ 
      \phantom{-}\frac 13 & \phantom{-} 0 & \phantom{-}\frac 13 & -1 &
      \phantom{-}\frac 13 & \phantom{-} 0 \\ 
      \phantom{-} 0& \phantom{-}\frac 13 & \phantom{-} 0 &
      \phantom{-}\frac 13 & -1 & 
      \phantom{-}\frac 13  \\ 
      \phantom{-}\frac 13 & \phantom{-} 0& \phantom{-}\frac 13 &
      \phantom{-} 0 & 
      \phantom{-}\frac 13 & -1 \\ 
    \end{pmatrix}
  \end{equation*}
  whose eigenvalues are $0$ (simple), $-1$ (with
  multiplicity four), and  $-2$ (simple).
\end{proof}

We finally show, as a corollary of the previous result, that the
spectral gap of the ABC dynamics on the complete graph is of order one.

\begin{proof}[Proof of Lemma~\ref{t:cor}]
  Fix $N$ multiple of three and let $\chi_N\colon
  S_N\to\Omega_N$ be the projection defined by
  \begin{equation*}
    (\chi_N \sigma)  \, (x) :=
    \begin{cases}
      A & \textrm{ if } \sigma(x)\equiv 1 \mod 3\\
      B & \textrm{ if } \sigma(x)\equiv 2 \mod 3\\
      C & \textrm{ if } \sigma(x)\equiv 3 \mod 3.
    \end{cases}
  \end{equation*}
  Namely, $\chi_N$ resolves only three out of the original $N$
  colors. 
  Recalling \eqref{ham}, let $E_N\colon S_N\to\mb R$ be defined
  by $E_N := N\, H_N \circ \chi_N$. For this choice 
  the ABC dynamics with long jumps , i.e.\ the process
  with the generator \eqref{Lc=}, can be realized as the
  $\chi_N$-projection of the process with generator \eqref{genG}.
  In particular, $\nu_N^\beta = \pi_N^\beta\circ \chi_N^{-1}$ and 
  $\gap\big( \ms L_N^\beta  \big)  \ge \gap\big(\mc G_N^\beta\big)$.
  Since $H_N \circ \chi_N$ satisfies the hypotheses in
  Theorem~\ref{t:a1}, the statement follows. 
\end{proof}

\subsection*{Acknowledgements.}
We express our warmest thanks to C.~Furtlehner who poin\-ted
out to us the possible occurrence of a metastable behavior for the ABC
model. We also thank T.~Bodineau and J.L.~Lebowitz for useful discussions.  
N.C.\ and G.P.\ acknowledges the financial support of the European Research
Council through the ``Advanced Grant'' PTRELSS 228032.

\end{document}